\newtheorem{theorem}{Theorem}
\newtheorem{lemma}[theorem]{Lemma}
\newtheorem{proposition}[theorem]{Proposition}
\newtheorem{corollary}[theorem]{Corollary}
\theoremstyle{definition}
\newtheorem{defn}[theorem]{Definition}
\theoremstyle{remark}
\newtheorem{remark}[theorem]{Remark}
\newcommand{\Rr}{\mathbb{R}}
\newcommand{\Nn}{\mathbb{N}}
\newcommand{\Zz}{\mathbb{Z}}
\newcommand{\I}{\mathcal{I}}
\DeclareMathOperator{\Leb}{Leb}
\DeclareMathOperator{\diam}{diam}
\def\veca{{\text{\boldmath$a$}}}
\def\vecb{{\text{\boldmath$b$}}}
\def\vecx{{\text{\boldmath$x$}}}
\def\vecy{{\text{\boldmath$y$}}}
\begin{document}

\title[Dynamics of  piecewise increasing contractions]
{Dynamics of  piecewise increasing  contractions}

\author[Gaiv\~ao]{Jos\'e Pedro Gaiv\~ao}
\address{Departamento de Matem\'atica and CEMAPRE/REM, ISEG\\
Universidade de Lisboa\\
Rua do Quelhas 6, 1200-781 Lisboa, Portugal}
\email{jpgaivao@iseg.ulisboa.pt}

\author[Nogueira]{Arnaldo Nogueira}

\address{Aix Marseille Universit\'e, CNRS\\ Centrale Marseille, Institut de Math\'ematiques de Marseille\\ 163 avenue de Luminy, Case 907, 13288 Marseille, Cedex 9, France}
\email{arnaldo.nogueira@univ-amu.fr}

\date{\today}

\begin{abstract}

Let  $I_1=[a_0,a_1),\ldots,I_{k}= [a_{k-1},a_k)$ be a partition of the interval $I=[0,1)$ into  $k$ subintervals. Let
 $f:I\to I$ be a map  such that each restriction $f|_{I_i}$ is an increasing   Lipschitz contraction.
 We prove that any   $f$ admits at most  $k$  periodic orbits, where the upper bound is sharp. 
 We are also interested in the dynamics of piecewise linear $\lambda$-affine maps, where $0<\lambda<1$. Let $b_1,\ldots,b_k$ be real numbers and 
 let  $F_\lambda: I\to \Rr$ be a function such that each restriction $F_\lambda|_{I_i}(x)=\lambda x +b_i$. 
 Under a generic assumption on the parameters $a_1,\ldots,a_{k-1},b_1,\ldots,b_k$, we prove that, up to a zero Hausdorff dimension set of slopes $\lambda$, the $\omega$-limit set of the piecewise $\lambda$-affine maps $f_\lambda:x\in I \mapsto F_\lambda(x)\pmod{1}$ at every point equals a periodic orbit and there exist at most $k$ periodic orbits.
Moreover, let $\mathfrak{E}^{(k)}$ be the exceptional  set of parameters $\lambda,a_1,\ldots,a_{k-1},b_1,\ldots,b_k$  which define non-asymptotically periodic $f$, we prove that 
 $\mathfrak{E}^{(k)}$ is a Lebesgue null measure set whose Hausdorff dimension is large or equal to $k$.
\end{abstract}

\maketitle

\section{Introduction}\label{sec:intro}

\noindent
Let $I=[0,1)$ and $f:I\to I$ be an interval map which is continuous up to finitely
many points and right continuous at every discontinuity point. We call $f$ a \textit{piecewise increasing contraction}, if there exists 
$0<\lambda<1$ such that on every domain $D$ of continuity of $f$, the restriction map $f|_D$ is increasing and $\lambda$-Lipschitz\footnote{$|f(x)-f(y)| \leq \lambda |x-y|$ holds for any $x,y\in D$.}. Throughout the paper \textit{increasing} means strictly increasing.

 Let $x\in I$ and denote by $\omega(f,x)$ the $\omega$-limit set of $f$ at the point $x$ and 
$$
\displaystyle \omega(f)= \bigcup_{x\in I} \omega(f,x).
$$
 We say that $f$ is \textit{asymptotically periodic} if, for every $x\in I$, $\omega(f,x)$ equals a periodic orbit  and $\omega(f)$ consists of finitely many periodic orbits.

The study of the dynamics of interval piecewise  contractions has  attracted the attention of many authors, in particular see  \cite{BS20,Bu93,BKLN20, CCG21, CGM20, JO19,LN18,LN19, N18, NP15,NPR18}. 
The motivation of our first theorem comes mainly from  \cite{NPR18} which shows that generically piecewise contractions are asymptotically periodic. We recall that in \cite{NP15}, the authors prove  that any injective interval piecewise contraction which has $n$ discontinuities admits at most $n+1$ periodic orbits and this upper bound is sharp. 
In this paper, our first goal is to present classes of piecewise increasing contractions, not necessarily injective,  which are asymptotically periodic and to prove an upper bound for their number of periodic orbits.

\begin{theorem}\label{th:numbermain}
Let $f$ be a piecewise increasing contraction with $n$ discontinuity points. Then $f$ has at most $n+1-\ell$ periodic orbits where $\ell$ is the number of discontinuity points whose image under $f$ equals zero.
\end{theorem}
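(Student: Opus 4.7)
For each periodic orbit $\mathcal{O}$ of $f$, set $\mu(\mathcal{O}):=\min\mathcal{O}$ and let $\pi(\mathcal{O})\in\{1,\dots,n+1\}$ denote the index with $\mu(\mathcal{O})\in I_{\pi(\mathcal{O})}$. The plan is to prove that $\pi$ is an injection from the set of periodic orbits of $f$ into
\[
\mathcal{G}:=\{1\}\cup\{j\in\{2,\dots,n+1\}:f(a_{j-1})>0\},
\]
a set of cardinality exactly $n+1-\ell$, which immediately yields the claimed bound.

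That $\pi(\mathcal{O})\in\mathcal{G}$ is direct. If $\pi(\mathcal{O})=j\ge 2$ and $f(a_{j-1})=0$, then the $\lambda$-Lipschitz property of $f|_{I_j}$ gives
\[
f(\mu(\mathcal{O}))\le\lambda\bigl(\mu(\mathcal{O})-a_{j-1}\bigr)\le\lambda\,\mu(\mathcal{O})<\mu(\mathcal{O}),
\]
using $a_{j-1}>0$ and $\lambda<1$, contradicting the fact that $f(\mu(\mathcal{O}))\in\mathcal{O}$ is at least $\min\mathcal{O}$.

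Injectivity of $\pi$ is the crux and where I expect the main difficulty. Suppose distinct orbits $\mathcal{O}_1,\mathcal{O}_2$ satisfy $\pi(\mathcal{O}_1)=\pi(\mathcal{O}_2)=j$ with minima $x_1<x_2$ in $I_j$. Since $g(z):=f(z)-z$ is strictly decreasing on $I_j$ by the contraction property and $g(x_1),g(x_2)\ge 0$, the unique fixed point $\tilde x$ of the contractive extension of $f|_{I_j}$ satisfies $\tilde x\ge x_2$, and $f(x_1)>x_1$ (so $\mathcal{O}_1$ has period at least two). If $\tilde x\in I_j$, then $\tilde x$ is a fixed point of $f$ itself; every $z\in I_j$ with $z\le\tilde x$ has $f(z)\in[z,\tilde x]\subseteq I_j$, so its forward orbit stays in $I_j$ and converges to $\tilde x$, forcing $\mathcal{O}_1=\mathcal{O}_2=\{\tilde x\}$ by periodicity, a contradiction. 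The remaining case $\tilde x\ge a_j$ (in which $f(z)>z$ throughout $I_j$ and both orbits must leave $I_j$) is the main obstacle; I plan to handle it through the first-return map $R$ of $f$ to $I_j$, which on each of its cylinders (sets of $I_j$-points sharing a common first-return itinerary through the pieces of $f$) is an increasing $\lambda^\tau$-contraction admitting at most one fixed point. The critical step is to show $x_1,x_2$ necessarily lie in a common cylinder. Using the Lipschitz bound $f^k(x_2)-f^k(x_1)\le\lambda^k(x_2-x_1)$ valid while the iterates share a common piece, at the smallest $k^*$ for which $f^{k^*}(x_1)\in I_{p_1}$ and $f^{k^*}(x_2)\in I_{p_2}$ with $p_1<p_2$, one obtains
\[
f^{k^*}(x_2)-a_{p_2-1}\le f^{k^*}(x_2)-f^{k^*}(x_1)\le\lambda^{k^*}(x_2-x_1),
\]
so if the separating discontinuity were "bad", i.e.\ $f(a_{p_2-1})=0$, then $f^{k^*+1}(x_2)\le\lambda^{k^*+1}(x_2-x_1)<x_2$, contradicting the minimality of $x_2$ in $\mathcal{O}_2$. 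Iterating this Lipschitz analysis along the full first-return itinerary is intended to rule out any divergence, placing $x_1$ and $x_2$ in a common cylinder; uniqueness of the fixed point of $R$ on that cylinder then forces $x_1=x_2$, the desired contradiction. Once injectivity is established, $m\le|\mathcal{G}|=n+1-\ell$.
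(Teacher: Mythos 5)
Your step (a) is correct, but the injectivity of $\pi$ --- which you rightly flag as the crux --- is not merely unproven: it is false. Take $\lambda=\tfrac12$ and
$$
f(x)=\begin{cases}\tfrac{x}{2}+0.55, & x\in[0,0.5),\\[2pt] \tfrac{x}{2}-0.2375, & x\in[0.5,0.6),\\[2pt] \tfrac{x}{2}-0.05, & x\in[0.6,1).\end{cases}
$$
Each branch is an increasing $\tfrac12$-contraction into $[0,1)$, $n=2$, and $\ell=0$ (indeed $f(0.5)=0.0125$, $f(0.6)=0.25$). The sets $\{0.05,\,0.575\}$ and $\{0.3,\,0.7\}$ are two distinct period-two orbits whose minima $0.05$ and $0.3$ both lie in $I_1=[0,0.5)$, so $\pi$ is not injective. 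This example also pinpoints why your sketch for injectivity cannot be completed: the two orbits separate at $k^*=1$ across the discontinuity $a_2=0.6$, which is a ``good'' discontinuity, and your Lipschitz estimate only forbids separation across a discontinuity mapping to $0$. When the separating discontinuity is good, the two minima simply land in different cylinders of the first-return map to $I_1$ (here the branches $x\mapsto\tfrac{x}{4}+0.0375$ on $[0,0.1)$ and $x\mapsto\tfrac{x}{4}+0.225$ on $[0.1,0.5)$), each carrying its own fixed point. So the assignment ``orbit $\mapsto$ continuity interval of its minimum'' cannot give the bound.

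The paper's argument attaches a different label to each periodic orbit. Given a periodic point $x$ of period $p$, let $W$ be the connected component of $I\setminus S^{(p)}$ containing $x$, where $S^{(p)}$ is the set of singular points of $f^p$ (points some iterate of which, before time $p$, hits $0$ or a discontinuity). Since $f^p|_W$ is increasing, $\lambda^p$-Lipschitz and right continuous with $f^p(x)=x$, one gets $f^p(W)\subset W$, so the left endpoint $c$ of $W$ satisfies $\omega(f,c)=O(f,x)$; following $c$ forward to the first singular point of $f$ on its orbit assigns to the orbit a singular point whose $\omega$-limit set equals that orbit. Distinct periodic orbits therefore receive singular points in distinct classes of the relation ``same $\omega$-limit set'', and every discontinuity with image $0$ lies in the class of $0$, which is exactly what produces $n+1-\ell$. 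If you want to rescue a ``minimum of the orbit'' bookkeeping, you would have to work with continuity intervals of $f^p$ rather than of $f$, which is in essence what the paper does.
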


We call attention that our approach to prove  Theorem~\ref{th:numbermain} is elementary and $f$ is not assumed to be asymptotically periodic.
We also recall that the result obtained in \cite{NPR18} for an upper bound of the number of periodic orbits concerns generic piecewise contractions, thus it can not be applied to prove Theorem~\ref{th:numbermain}.  In \cite[Theorem 1.1]{CCG21}, the upper bound $n+1$ is  obtained, however under an extra hypothesis: it is supposed that $f$ has no singular connection  (see Definition~\ref{def:sing}) which excludes the existence of an orbit which contains two discontinuity points.

Next we state a couple of corollaries of Theorem~\ref{th:numbermain}. First, Corollary~\ref{cor:circle} that generalizes previous results and also  \cite[Theorem 10]{LN18} to any number of discontinuities, and Corollary~\ref{cor:nPC} which improves the upper bound obtained  in \cite[Theorem 1.1]{NPR18} in the case of positive slope. Theorem~\ref{th:numbermain} and its corollaries are proved in Section~\ref{sec:preliminary}.

Identifying the circle $\Rr/\Zz$ with the interval $I$ through the canonical bijection $I\hookrightarrow \Rr \to \Rr/\Zz$, we may see any 
 orientation-preserving piecewise contraction circle map as a piecewise increasing contraction. 

\begin{corollary}\label{cor:circle}
Let $f$ be a  circle map which is  an orientation-preserving piecewise contraction.  Assume that $f$ has $k$ points of discontinuity on the circle $\Rr/\Zz$ and $f$ is right continuous at those points. Then $f$ has at most $k$ periodic orbits and this upper bound is sharp.
\end{corollary}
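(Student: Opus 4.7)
My plan is to reduce the corollary to Theorem~\ref{th:numbermain} by cutting the circle at one of its $k$ discontinuity points. Concretely, I would fix a discontinuity point $p$ of $f$ on $\Rr/\Zz$ and identify the circle with $I=[0,1)$ via the canonical bijection that sends $p$ to $0$. The key bookkeeping is that, under this identification, the $k-1$ other discontinuities $p_2,\ldots,p_k$ become interior points of $I$, while the discontinuity at $p$ is absorbed into the left endpoint $0$. Hence the induced interval map $\tilde f:I\to I$ has exactly $n=k-1$ interior discontinuity points.

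Next, I would check that $\tilde f$ is a piecewise increasing contraction, right-continuous at each interior discontinuity. This is essentially automatic: the continuity arcs of $f$ on the circle correspond bijectively to the continuity subintervals of $\tilde f$ on $I$; orientation-preservation forces each restriction to be increasing; and the $\lambda$-Lipschitz property is preserved by the translation used to cut the circle. Right-continuity of $\tilde f$ at its interior discontinuities (and at $0$, which corresponds to $p$) is inherited from the hypothesis on $f$. Applying Theorem~\ref{th:numbermain} with $n=k-1$ yields at most $(k-1)+1-\ell=k-\ell\leq k$ periodic orbits, which is the desired upper bound.

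For sharpness, I would exhibit an explicit example realizing $k$ periodic orbits: partition $\Rr/\Zz$ into $k$ half-open arcs $J_1,\ldots,J_k$, pick $q_i$ in the interior of $J_i$, and set $f|_{J_i}(x)=\lambda(x-q_i)+q_i$ for some $\lambda\in(0,1)$, choosing $q_i$ centered enough that $f(J_i)\subset J_i$. Each $q_i$ is then a fixed point of $f$, giving $k$ distinct periodic orbits, and the $k$ endpoints of the $J_i$ are exactly the discontinuities of $f$ (making the right-continuity normalization legitimate).

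The only subtle point is the discontinuity count in the first step: cutting at a point where $f$ is continuous would produce an interval map with $n=k$ interior discontinuities and only yield the bound $k+1-\ell$, which is not sharp enough. Cutting precisely at a discontinuity of $f$ is what converts the $k$ circular discontinuities into $k-1$ interior ones plus a harmless endpoint, and is the whole content of the reduction.
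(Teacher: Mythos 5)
There is a genuine gap in your reduction, and it is exactly the subtlety that the $-\ell$ term in Theorem~\ref{th:numbermain} exists to handle. Your claim that ``the continuity arcs of $f$ on the circle correspond bijectively to the continuity subintervals of $\tilde f$ on $I$,'' and hence that $\tilde f$ has exactly $n=k-1$ interior discontinuity points, is false in general. Cutting at a discontinuity $p$ does ensure that the \emph{domains} of the branches become genuine subintervals of $I$, but it does not control the \emph{images}: a branch of $f$ whose image arc wraps around the cut point $p$ becomes, after the identification, a branch of $\tilde f$ that increases toward $1$, jumps down to $0$ at the preimage of $p$, and continues increasing. Each such branch contributes an extra interior discontinuity of $\tilde f$ beyond the $k-1$ you counted, so in general $n=k-1+m$ for some $m\geq 0$. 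With your count, Theorem~\ref{th:numbermain} would only give the bound $(k-1+m)+1-\ell$, which is not $\leq k$ unless you also observe that at each of these $m$ new discontinuities $\tilde f$ takes the value $0$ (by right continuity), whence $\ell\geq m$ and the bound collapses back to $k$. This last observation is the whole content of the paper's proof; without it your argument does not close. Note also that this is why the corollary cannot be deduced from an ``$n+1$ periodic orbits'' statement alone.

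The rest of your proposal is sound. Cutting at a discontinuity (rather than at a continuity point) is indeed necessary, as you note, and your sharpness example --- $k$ disjoint half-open arcs each mapped affinely into itself around an interior fixed point $q_i$, with the $k$ arc endpoints being genuine discontinuities because $\lambda<1$ forces the left limit at each endpoint to land strictly inside the preceding arc --- is correct and is in fact more explicit than what the paper provides, since the paper asserts sharpness without exhibiting an example.
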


Here we are also interested in the dynamics of  piecewise $\lambda$-affine maps. Throughout the article $\Nn$ denotes the set of positive integers $\{1,2,\ldots\}$. Given $k\in \Nn$, let 
$$
A^{(k)}=\{(0,a_1, \ldots,a_{k-1})\in \Rr^{k}: 0<a_1<\ldots<a_{k-1}<1\}.
$$

Let $0<\lambda<1$.  A function $F: I \to \Rr$ is called a {\it $k$-interval piecewise $\lambda$-affine function}, if there exist $\veca=(0,a_1, \ldots,a_{k-1})\in A^{(k)}$ and $\vecb=(b_1,\ldots,b_k)\in \Rr^k$ such that 
\begin{equation}\label{eq:FF}
F(x)= \lambda x+ b_i, \; \forall \;1\leq i\leq k \;\mbox{and} \; \forall \; x\in[a_{i-1},a_i),
\end{equation}
where $a_0=0$ and $a_k=1$.
We call   $f\colon I \to I $ defined by $f(x)=F(x)\pmod{1}$ a {\it piecewise $\lambda$-affine map}. For this special case, we prove the following result.

\begin{corollary}\label{cor:nPC}
Let $0<\lambda<1$ and let $F\colon I \to \Rr$ be a $k$-interval piecewise $\lambda$-affine function \eqref{eq:FF}. Then, the map $f = F \pmod{1}$ has at most $k$ periodic orbits.
\end{corollary}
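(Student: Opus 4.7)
The plan is to reduce the corollary to Theorem~\ref{th:numbermain} by carefully accounting for the extra discontinuities that the modular reduction creates.

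First, I would observe that $f = F \pmod{1}$ is itself a piecewise increasing contraction in the sense of the paper: on any maximal open subinterval of $I$ where $F$ does not take integer values, $f$ coincides with $x\mapsto \lambda x + b_i - m$ for some constants $b_i$ and $m\in\Zz$, so the restriction is increasing and $\lambda$-Lipschitz. Hence Theorem~\ref{th:numbermain} applies, and the task reduces to estimating the quantity $n + 1 - \ell$, where $n$ is the number of discontinuities of $f$ and $\ell$ the number of those discontinuities whose image under $f$ equals $0$.

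Next, I would enumerate the discontinuities of $f$. They arise from two sources. The first is the partition of $F$: the $k-1$ points $a_1,\ldots,a_{k-1}$. The second is the set of ``integer-crossings'' of $F$, namely the points $c$ lying strictly inside some piece $(a_{i-1},a_i)$ where $F(c)\in\Zz$. On each piece $[a_{i-1},a_i)$ the affine map $F$ is strictly increasing with slope $\lambda$, so the number $d_i'$ of such integer-crossings in $(a_{i-1},a_i)$ is finite; write $N=\sum_{i=1}^k d_i'$. These two sources produce disjoint sets of points, so the total number of discontinuities satisfies $n \leq (k-1)+N$.

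The key observation is that every integer-crossing $c$ is a discontinuity at which $f(c)=0$: by continuity of $F$ on the interior of its piece and the fact that $F(c)$ is an integer, we have $f(c)=F(c)\bmod 1 = 0$, while $f(c^-) \to 1$. Consequently $\ell \geq N$, irrespective of whether any of the original breakpoints $a_i$ also happen to satisfy $f(a_i)=0$. Combining the two inequalities gives
\[
n+1-\ell \;\leq\; (k-1)+N+1-N \;=\; k,
\]
and Theorem~\ref{th:numbermain} yields at most $k$ periodic orbits.

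I do not expect any serious obstacle here; the only point demanding care is the possibility that an integer-crossing of $F$ coincides with one of the breakpoints $a_i$ (that is, $b_i+\lambda a_i\in\Zz$). In that case the ``extra'' discontinuity is not really new, but then the bound on $n$ is strictly smaller while $\ell$ may or may not gain a contribution; in any case the inequality $n+1-\ell\leq k$ is preserved. A brief case analysis at these coincidences suffices to finish the argument cleanly.
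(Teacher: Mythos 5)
Your proof is correct and takes essentially the same route as the paper's: reduce to Theorem~\ref{th:numbermain} by observing that $f$ has the $k-1$ breakpoints of $F$ plus the extra discontinuities created by the reduction mod $1$, each of which maps to $0$ and therefore cancels in the bound $n+1-\ell$. Your version is simply more explicit about the bookkeeping (the count $N$, the inequality $\ell\geq N$, and the harmless coincidence $F(a_i)\in\Zz$) than the paper's brief argument.
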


In \cite{NP15}, this result was obtained under the assumptions that the map is injective.
In \cite{NPR18}, the authors considered a family of piecewise $\lambda$-affine maps, where $0<\vert \lambda \vert <1$ is fixed. Precisely, let $0<\vert \lambda \vert <1$ and $F$ be a $k$-interval piecewise $\lambda$-affine function given by \eqref{eq:FF}.
Let  $\delta $ be a real parameter and define a parametrized family of piecewise $\lambda$-affine maps  $f_\delta:I\to I$ using the following set-up:
$$
f_\delta:x\in I \mapsto F(x)+\delta\pmod{1}.
$$
It is proved in \cite{NPR18} that, for Lebesgue almost every $\delta$, the map $f_\delta$ is asymptotically periodic and has at most $k+1$ periodic orbits. However, the upper bound $k+1$ can be achieved only when  the slope $\lambda$ takes a negative value.

In this paper  another set-up  will be analysed.  Any piecewise $\lambda$-affine map $f$ is determined (not uniquely) by the parameters $\veca$, $\vecb$ and $\lambda$ as described above.  Henceforth, we will denote such maps by $f_{\veca,
\vecb,\lambda}$ whenever we need to stress that dependency.  Given $k\in\Nn$,  let
$$
\mathfrak{E}^{(k)}= \{(\veca,\vecb,\lambda)\in A^{(k)}\times\Rr^k\times (0,1)\colon f_{\veca,\vecb,\lambda} \text{ is not asymptotically periodic}\}.
$$

Our goal is to measure the set of slopes  $0<\lambda<1$ such that the associated  piecewise $\lambda$-affine map $f$ is not asymptotically periodic.  In other words,  given $(\veca,\vecb)\in A^{(k)}\times\Rr^k$ we wish to measure the size of the sections 
$$
\mathfrak{E}^{(k)}_{\veca,\vecb}=\{\lambda\in(0,1)\colon (\veca,\vecb,\lambda)\in \mathfrak{E}^{(k)}\}.
$$

\begin{defn}\label{def:Zind}
We say that $\vecx=(x_1,\ldots,x_{k})\in\Rr^k$ and $\vecy=(y_1,\ldots,y_k)\in\Rr^{k}$ are \textit{$\Zz$-independent}, if 
\begin{equation}\label{assumption}
x_i-y_{j}\notin \Zz,\quad \forall\, (i,j)\in\{1,\ldots,k\}^2.
\end{equation}

 \end{defn}

Notice that the condition \eqref{assumption} is a generic property.


 Our main theorem is the following:

\medskip

\begin{theorem}\label{th:main}
Let $k\in\Nn$. If $\veca\in A^{(k)}$ and $\vecb\in\Rr^k$ are $\Zz$-independent,  then $\mathfrak{E}^{(k)}_{\veca,\vecb}$ has zero Hausdorff dimension, i.e., 
$
\dim_H \mathfrak{E}^{(k)}_{\veca,\vecb}  =0.
$
\end{theorem}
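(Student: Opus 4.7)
The strategy is to turn failure of asymptotic periodicity into a Diophantine near-vanishing condition on $\lambda$, and then to exclude those parameters by a limsup covering argument.

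First, I would reduce the problem to the recurrence of a discontinuity point. By Corollary~\ref{cor:nPC} the map $f_\lambda$ has at most $k$ periodic orbits, so if $\lambda\in\mathfrak{E}^{(k)}_{\veca,\vecb}$ there is some $x\in I$ whose orbit does not converge to any of them. Because each branch is a uniform $\lambda$-contraction, a standard argument (Banach fixed point applied to $\omega(f_\lambda,x)$ if it avoids the singular set) shows that any infinite $\omega$-limit set is forced to meet $\{0,a_1,\ldots,a_{k-1}\}$. One then selects such a discontinuity $a_i\in\omega(f_\lambda,x)$, verifies that it is itself recurrent, and observes that its symbolic itinerary cannot be eventually periodic.

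Second, I would encode these near-returns as polynomials in $\lambda$. Along an admissible itinerary $\underline{j}=(j_1,\ldots,j_n)\in\{1,\ldots,k\}^n$ with integer shifts $\underline{m}=(m_1,\ldots,m_n)$ coming from the successive $\pmod{1}$ reductions, iterating \eqref{eq:FF} from $a_i$ yields
\[
f_\lambda^{n}(a_i;\underline{j},\underline{m})=\lambda^{n}a_i+\sum_{l=1}^{n}\lambda^{n-l}(b_{j_l}-m_l),
\]
a polynomial $P_{n,\underline{j},\underline{m}}(\lambda)$ of degree at most $n$. The $\Zz$-independence of $\veca,\vecb$ (Definition~\ref{def:Zind}) prevents $P_{n,\underline{j},\underline{m}}(\lambda)-a_{i'}$ from being identically zero. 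Step~1 then places $\mathfrak{E}^{(k)}_{\veca,\vecb}$ inside the set of $\lambda$ for which there exist discontinuities $a_i,a_{i'}$, triples $(n_r,\underline{j}^{(r)},\underline{m}^{(r)})$ with $n_r\to\infty$, and $\varepsilon_r\to 0$ satisfying $|P_{n_r,\underline{j}^{(r)},\underline{m}^{(r)}}(\lambda)-a_{i'}|<\varepsilon_r$ for every $r$.

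To bound the Hausdorff dimension of this limsup set, I would fix a compact $J\subset(0,1)$ and apply a Remez/Cartan type estimate: the Lebesgue measure of $\{\lambda\in J:|P_{n,\underline{j},\underline{m}}(\lambda)-a_{i'}|<\varepsilon\}$ is bounded by $C_J\varepsilon^{1/n}$, and this set is a union of at most $n$ intervals. Since the number of choices of $(\underline{j},\underline{m},i,i')$ at depth $n$ is bounded by $(2k)^n\cdot k^2$, a diagonal choice $\varepsilon=\varepsilon_n$ decaying faster than any exponential in $n$ yields, for every $s>0$, a cover of $\mathfrak{E}^{(k)}_{\veca,\vecb}\cap J$ of arbitrarily small $s$-dimensional mass. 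Exhausting $(0,1)$ by such $J$'s then gives $\dim_H\mathfrak{E}^{(k)}_{\veca,\vecb}=0$.

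\textbf{Main obstacle.} The delicate point is the last step: naively combining $(2k)^n$ itineraries with the Cartan estimate $\varepsilon^{1/n}$ only yields a dimension bound of order $\log(2k)/\log(1/\lambda)$ unless one can secure extra decay in the $\varepsilon_r$. I expect the proof to extract this rate by exploiting $\Zz$-independence more carefully: either by showing that near-recurrent admissible itineraries of length $n$ are much sparser than $(2k)^n$, or by combining several simultaneous near-return conditions (different discontinuities, different return depths) so that $\Zz$-independence upgrades approximate recurrence into a high-order polynomial coincidence pinning $\lambda$ to a set of dimension zero. The $\Zz$-independence is the key non-degeneracy preventing the polynomials $P_{n,\underline{j},\underline{m}}-a_{i'}$ from collapsing to trivial identities, and making this quantitative is where the main technical work will lie.
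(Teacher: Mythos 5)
Your architecture matches the paper's: reduce non-asymptotic-periodicity to near-returns of the orbit of a discontinuity to the set $\{a_0,\ldots,a_{k-1}\}$, encode the $n$-th iterate as a polynomial in $\lambda$ whose non-degeneracy is guaranteed by $\Zz$-independence, and kill the exceptional $\lambda$'s by a measure estimate times a count of itineraries. However, the step you flag as the ``main obstacle'' is a genuine gap, and neither of the two mechanisms you speculate about is quite the one that closes it. First, the accuracy $\varepsilon_n$ of the near-return at depth $n$ is not a free parameter: the polynomial value $H_\omega(\lambda)$ approximates the relevant accumulation point only to precision of order $\lambda^n/(1-\lambda)$ (this is Lemma~\ref{lem:covering}), so the covering is forced to use $\varepsilon_n\asymp\lambda^n$. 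A Remez/Cartan bound $\Leb\{|P|<\varepsilon\}\lesssim\varepsilon^{1/n}$ then produces intervals of diameter $\asymp\lambda$, which do not shrink as $n\to\infty$, so no Hausdorff estimate at all comes out --- not even a positive-dimension bound. The paper replaces Remez by the \L{}ojasiewicz-type inequality of Lemma~\ref{polyestimate} (from \cite{G17}), which gives $\Leb\{|p|<\varepsilon\}\leq C(n)\,\varepsilon^{\theta}$ with an exponent $\theta$ \emph{independent of the degree} and only polynomial growth of $C(n)$; this is available precisely because $\Zz$-independence makes the constant term $\delta_{i_{n-1}}-a_j$ of $H_\omega(\lambda)-a_j$ bounded away from zero, so the polynomial can be normalized to have constant term $1$ and uniformly bounded coefficients. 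Second, the reduction of the itinerary count from $(2k)^n$ to subexponential is not a sparsity-of-near-returns or $\Zz$-independence phenomenon: it is the zero singular entropy of piecewise increasing contractions (Proposition~\ref{lem:singent}), upgraded in Lemma~\ref{lem:singular entropy} to hold uniformly over a small interval of slopes around a $\lambda$ without singular connections (the countably many slopes with singular connections being discarded first via Lemma~\ref{lem:sing connections}). With $\#\I_n^\varepsilon=e^{o(n)}$ and diameters $\lesssim n^{O(1)}\lambda^{\theta n}$, the $\sigma$-dimensional premeasure tends to $0$ for every $\sigma>0$. Without both ingredients your scheme does not yield dimension zero.

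Two smaller points: your first reduction step (``Banach fixed point applied to $\omega(f_\lambda,x)$'') glosses over the case where the contraction fixed point is a left endpoint of a component and is only attained as a left limit; the paper handles this by excluding left periodic singular points (a subset of the countable singular-connection set) and then invoking Theorem~\ref{thm:partition} with the finiteness of $Q_\lambda=\bigcup_n f_\lambda^{-n}(S_\lambda)$ as the criterion. Also, the uniformity in $\lambda$ of the itinerary sets $\I_n^\varepsilon$ over a whole parameter interval is essential for the covering argument and is absent from your sketch.
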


Concerning the whole exceptional set $\mathfrak{E}^{(k)}$, we have the following result where Lebesgue measure means the $2k$-dimensional Lebesgue measure as we make the trivial identification of $A^{(k)}$ with a subset of $\Rr^{k-1}$ by dropping the first coordinate.

\begin{corollary}\label{cor:main}
For any $k\in\Nn$,  the exceptional set $\mathfrak{E}^{(k)}$ is a Lebesgue null measure set whose Hausdorff dimension is large or equal to $k$.
\end{corollary}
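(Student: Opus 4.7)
The corollary has two parts, which I treat separately.

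\emph{Lebesgue null measure.} I apply Fubini on the product $A^{(k)}\times\Rr^k\times(0,1)$. The set
$$
\Delta=\bigcup_{i,j\in\{1,\ldots,k\},\ n\in\Zz}\bigl\{(\veca,\vecb)\in A^{(k)}\times\Rr^k\colon a_i-b_j=n\bigr\}
$$
of non-$\Zz$-independent pairs is a countable union of affine hyperplanes in the $(2k-1)$-dimensional ambient space, hence Lebesgue null. For every $(\veca,\vecb)\notin\Delta$, Theorem~\ref{th:main} gives $\dim_H\mathfrak{E}^{(k)}_{\veca,\vecb}=0$, which forces $\Leb(\mathfrak{E}^{(k)}_{\veca,\vecb})=0$ since any subset of $\Rr$ of Hausdorff dimension less than one has vanishing one-dimensional Lebesgue measure. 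Fubini then yields $\Leb(\mathfrak{E}^{(k)})=0$.

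\emph{Hausdorff dimension, inductive step.} I prove $\dim_H\mathfrak{E}^{(k)}\geq k$ by induction on $k$ via a ``fake discontinuity'' embedding. Given $(k-1)$-interval parameters $(\veca',\vecb',\lambda)\in A^{(k-1)}\times\Rr^{k-1}\times(0,1)$, pick an index $i$ and a point $a^*\in(a'_{i-1},a'_i)$; insert $a^*$ into $\veca'$ to form $\veca\in A^{(k)}$, and duplicate the slope $b'_i$ to form $\vecb\in\Rr^k$ with $b_i=b_{i+1}=b'_i$. The two affine branches on $[a'_{i-1},a^*)$ and $[a^*,a'_i)$ coincide, so $f_{\veca,\vecb,\lambda}=f_{\veca',\vecb',\lambda}$ pointwise and the two maps have identical asymptotic dynamics. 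This gives a bi-Lipschitz embedding $\mathfrak{E}^{(k-1)}\times(a'_{i-1},a'_i)\hookrightarrow\mathfrak{E}^{(k)}$, hence $\dim_H\mathfrak{E}^{(k)}\geq\dim_H\mathfrak{E}^{(k-1)}+1$; iterating gives $\dim_H\mathfrak{E}^{(k)}\geq\dim_H\mathfrak{E}^{(1)}+(k-1)$.

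\emph{Base case and principal obstacle.} It remains to prove $\dim_H\mathfrak{E}^{(1)}\geq 1$. For $k=1$, the map $f(x)=\lambda x+b\pmod 1$ has one discontinuity, at $x^*=(1-b)/\lambda$, precisely when $b+\lambda>1$; in that regime it becomes an orientation-preserving contracting circle map with a well-defined rotation number $\rho(\lambda,b)$. Standard devil's-staircase arguments show that $\rho$ is continuous and, for each irrational $\alpha$ in its range, the level set $\rho^{-1}(\alpha)$ contains a continuous curve of positive length in the $(\lambda,b)$-plane. Since irrational rotation number forces the $\omega$-limit set of every orbit to be a minimal Cantor set (Denjoy phenomenon for piecewise contractions) rather than a periodic orbit, this curve lies entirely in $\mathfrak{E}^{(1)}$, giving $\dim_H\mathfrak{E}^{(1)}\geq 1$. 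Combined with the inductive step, this proves $\dim_H\mathfrak{E}^{(k)}\geq k$. The principal technical obstacle is the rigidity assertion ``irrational rotation number implies non-asymptotic periodicity'' in the piecewise $\lambda$-affine setting, which I would extract from the shadowing and comparison techniques already present in \cite{NPR18,CCG21}.
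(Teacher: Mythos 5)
Your Fubini argument for the null-measure part is essentially the paper's: discard the null set of non-$\Zz$-independent pairs, apply Theorem~\ref{th:main} to the sections, integrate. One point you skip is measurability of $\mathfrak{E}^{(k)}$, which Fubini requires; the paper handles this by showing $\mathfrak{E}^{(k)}\setminus E\subset Z$ where $E$ (singular connections) sits in countably many codimension-one algebraic sets and $Z$ is measurable because $Q_{\veca,\vecb,\lambda}$ is a countable union of preimages $f^{-n}_{\veca,\vecb,\lambda}(\{a_j\})$. You should either prove measurability of $\mathfrak{E}^{(k)}$ or, as the paper does, work with a measurable superset.

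For the dimension bound your route genuinely differs, and it is where the real issues lie. The paper does the ``fake discontinuity'' trick in one shot: set $\vecb=(b,\ldots,b)$, so that $f_{\veca,\vecb,\lambda}=R_{\lambda,b}$ for \emph{every} $\veca\in A^{(k)}$; this embeds $A^{(k)}\times\mathcal{E}$ into $\mathfrak{E}^{(k)}$ and the bound follows immediately from $\dim_H\mathcal{E}=1$, which is simply cited from \cite[Theorem 7.2]{JO19}. Your inductive version of the same idea is workable but has two concrete gaps. First, the set you embed is not a product $\mathfrak{E}^{(k-1)}\times(c,d)$ but a bundle whose fibre $(a'_{i-1},a'_i)$ varies with the base point, so the inequality $\dim_H(A\times B)\geq\dim_H A+\dim_H B$ does not apply as stated; you must first restrict to a subset of $\mathfrak{E}^{(k-1)}$ of full dimension on which a fixed interval of $a^*$ works (countable stability of $\dim_H$ makes this repairable, but it needs saying). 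Second, and more seriously, your base case replaces the citation of \cite{JO19} by two unproved assertions: that irrational rotation number implies non-asymptotic periodicity (true, and available in \cite{Bu93,LN18}, but ``shadowing and comparison techniques'' is not a proof), and that each irrational level set $\rho^{-1}(\alpha)$ contains a curve of positive length in the $(\lambda,b)$-plane. The latter is essentially the content of the hard direction of \cite{JO19}; note that for fixed $\lambda$ the irrational set of $b$'s has Hausdorff dimension zero by \cite{LN18}, and Theorem~\ref{th:main} shows the same for fixed $b$, so the claim that the union of these level curves has dimension one is exactly the delicate point and cannot be waved through as a ``standard devil's-staircase argument.'' As written, the base case is a genuine gap; citing \cite{JO19} closes it and collapses your induction to the paper's one-line embedding.
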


Theorem~\ref{th:main} and Corollary~\ref{cor:main} will be proved in Section~\ref{sec:proof}.

\begin{figure}[h]
\centering
\includegraphics[scale=0.6]{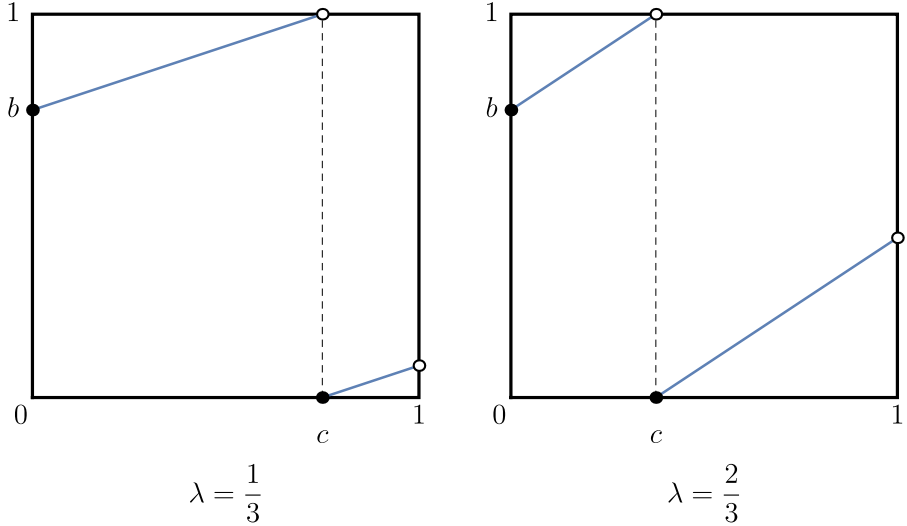}
\caption{Plot of $R_{\lambda,b}$ for two distinct values of $\lambda$.}
\label{fig:R}
\end{figure}

A concrete example to which our result applies is to the so-called contracted rotations (see Figure~\ref{fig:R}). According to \cite{LN18},  let $\lambda$ and $b$ be two real numbers such that $0<1-\lambda < b<1$ and set $c = {1-b\over \lambda}$.  A contracted rotation is the map  $R_{\lambda, b}: I \to I$  given by the splitted formula:
\begin{equation}\label{eq:contractedrotation}
R_{\lambda, b}(x) =  \begin{cases} \lambda x + b,&   \text{if} \quad 0\le x < c, 
\\
\lambda x + b -1,& \text{if}  \quad c \le x <1.
\end{cases}
\end{equation}
Notice that $R_{\lambda,b}(x)=\lambda x+ b \pmod{1}$ for every $x\in I$.  Therefore, the contracted rotation is a piecewise $\lambda$-affine map which is defined  by the $1$-interval piecewise $\lambda$-affine function $x\mapsto \lambda x +b$ with the
$\Zz$-independent $1$-tuples $\veca=(0)$ and $\vecb=(b)$.

\begin{figure}[h]
\centering
\includegraphics[scale=1]{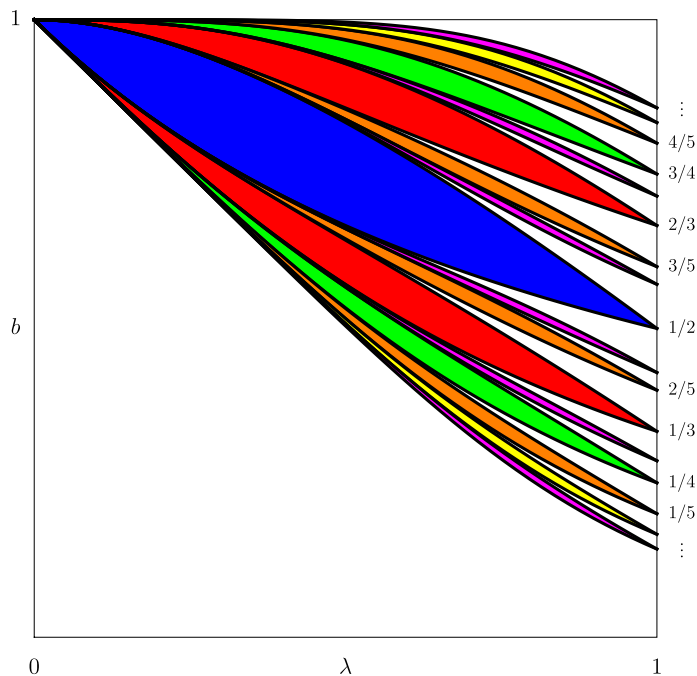}
\caption{Rational tongues: regions where $\rho(R_{\lambda,b})$ takes a rational value in the interval $(0,1)$.}
\label{fig:tongues}
\end{figure}

As we know (see \cite{Bu93,LN18}), any contracted rotation $R_{\lambda,b}$ admits a rotation number $0< \rho(R_{\lambda,b})<1$. 
 In Figure~\ref{fig:tongues}, it is described the regions in the triangle $\Delta$ formed by the parameters $(\lambda,b)$, i.e.,
$$
\Delta=\{(\lambda,b):0<1-\lambda<b<1\},
$$ 
 where the rotation number $\rho(R_{\lambda,b})$ takes a precise rational value.

The proofs of prior results about the measure and the  Hausdorff dimension of the  exceptional set 
\begin{equation}\label{exceptional1}
\mathcal{E}=\{(\lambda,b)\in \Delta\colon \rho(R_{\lambda,b}) \; \mbox{is irrational}\},
\end{equation}
use  the following proposition which describes the relation between the parameters $\lambda$ and $b$ whenever the rotation number takes a rational value (see \cite{DH87,Bu93,LN18}):

\begin{proposition}\label{pr:exc}
Let $0<\lambda<1$  and  $1\le p<q$ be relatively prime, then the rotation number of the map $R_{\lambda,b}$ takes the rational value $\displaystyle{p\over q} $ if, and only if, $b$ belongs to the interval
$$
\frac{1-\lambda}{1-\lambda^q} S\left(\lambda,\frac{p}{q}\right) \le b \le \frac{1-\lambda}{1-\lambda^q} \left( S\left(\lambda,\frac{p}{q}\right) + \lambda^{q-1}-\lambda^q \right),
$$
where $\displaystyle S\left(\lambda,\frac{1}{2}\right) =  1$ and 
$\displaystyle S\left(\lambda,\frac{p}{q}\right) =  1+\sum_{k=1}^{q-2}  \left( \left[ (k+1) \frac{p}{q} \right] -  \left[ k \frac{p}{q} \right] \right)\lambda^{k}$
 when $q>2$.
\end{proposition}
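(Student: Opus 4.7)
The plan is to reduce Proposition~\ref{pr:exc} to a symbolic-dynamics computation. Setting $c=(1-b)/\lambda$, $\epsilon(x)=0$ on $[0,c)$ and $\epsilon(x)=1$ on $[c,1)$, one has $R_{\lambda,b}(x)=\lambda x+b-\epsilon(x)$, and a direct induction gives
\[
R_{\lambda,b}^{n}(x_{0})\;=\;\lambda^{n}x_{0}\;+\;b\,\frac{1-\lambda^{n}}{1-\lambda}\;-\;\sum_{k=0}^{n-1}\lambda^{n-1-k}\,\epsilon_{k},
\]
with $\epsilon_{k}=\epsilon(R_{\lambda,b}^{k}(x_{0}))$. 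This will be the workhorse of both directions.

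Next I would invoke the classical rotation-number theory for contracted rotations (\cite{Bu93,LN18}): every $R_{\lambda,b}$ admits a rotation number $\rho\in[0,1]$, continuous and non-decreasing in $b$, and $\rho=p/q$ in lowest terms exactly when $R_{\lambda,b}$ has a $q$-periodic orbit whose coding is the Christoffel/Sturmian sequence $\epsilon_{k}=\lfloor(k+1)p/q\rfloor-\lfloor kp/q\rfloor$. In particular $\epsilon_{0}=0$, $\epsilon_{q-1}=1$. Choosing $x_{0}$ to be the minimum of the orbit and imposing $R_{\lambda,b}^{q}(x_{0})=x_{0}$ yields
\[
x_{0}\;=\;\frac{b}{1-\lambda}\;-\;\frac{1}{1-\lambda^{q}}\sum_{k=0}^{q-1}\lambda^{q-1-k}\epsilon_{k}.
\]
Since the inner factor $\epsilon_{1}\cdots\epsilon_{q-2}$ of a Christoffel word is a palindrome, the sum above equals $S(\lambda,p/q)$, giving $x_{0}=\tfrac{b}{1-\lambda}-\tfrac{S(\lambda,p/q)}{1-\lambda^{q}}$; every other iterate admits an analogous closed form.

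The two endpoints of the $b$-interval are then forced by the geometric constraints that make the orbit consistent with its prescribed coding. Labeling the orbit in value order $y_{0}<\cdots<y_{q-1}$ (with $y_{0}=x_{0}$ by the semiconjugacy on the attractor with the rigid rotation by $p/q$), the condition $y_{0}\ge 0$ yields the lower bound $b\ge\tfrac{(1-\lambda)S(\lambda,p/q)}{1-\lambda^{q}}$, with equality when $0$ enters the orbit. For the upper bound, the $p/q$-semiconjugacy identifies $y_{q-p-1}$ as the largest orbit point with $\epsilon=0$; the constraint $y_{q-p-1}\le c$ becomes binding, and since $R_{\lambda,b}(y_{q-p-1})=y_{q-1}=\lambda y_{q-p-1}+b$, the equality case forces $y_{q-1}=1$. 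Writing $y_{q-1}=R_{\lambda,b}^{n_{\max}}(x_{0})$ for the index $n_{\max}\equiv -p^{-1}\pmod{q}$, substituting into the iteration formula and solving for $b$ yields, after simplification, $b\le\tfrac{1-\lambda}{1-\lambda^{q}}\bigl(S(\lambda,p/q)+\lambda^{q-1}-\lambda^{q}\bigr)$. The converse (any $b$ in the closed interval gives $\rho=p/q$) follows by building the candidate orbit from the closed form, verifying its coding matches the partition, and invoking continuity and monotonicity of $\rho$ in $b$.

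The main obstacle is identifying the extremal index $n_{\max}\equiv -p^{-1}\pmod{q}$ within the Sturmian orbit and carrying out the algebra that extracts precisely the additive term $\lambda^{q-1}-\lambda^{q}$. The cleanest route is via the semiconjugacy between $R_{\lambda,b}$ on its attractor and the rigid rotation by $p/q$ on $\mathbb{Z}/q\mathbb{Z}$: order-preservation pins down $n_{\max}$ combinatorially, and the palindromic symmetry of the Christoffel inner factor produces the desired simplification. Boundary cases ($0$ entering the orbit at $b_{\text{lower}}$; $c$ entering the orbit, equivalently $y_{q-1}\to 1$, at $b_{\text{upper}}$) are handled by the right-continuity convention at $c$.
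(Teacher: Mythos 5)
The paper does not prove Proposition~\ref{pr:exc}: it is quoted as a known mode-locking result with references to \cite{DH87,Bu93,LN18}, so there is no internal proof to compare against. Your sketch follows the standard route of those references (explicit iteration formula, Sturmian/Christoffel coding of the $q$-periodic orbit, and the two binding inequalities), and its skeleton is sound: the only constraints that can fail are $y_{0}\ge 0$ and $y_{q-1}<1$, since $y_{q-p-1}<c$ is equivalent to $y_{q-1}<1$ and $y_{q-p}\ge c$ is equivalent to $y_{0}\ge 0$; the palindromicity of $\epsilon_{1}\cdots\epsilon_{q-2}$ does give $\sum_{k=0}^{q-1}\lambda^{q-1-k}\epsilon_{k}=S(\lambda,p/q)$, hence the lower bound.

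Two remarks on the parts you leave open. First, the ``main obstacle'' you flag (extracting $\lambda^{q-1}-\lambda^{q}$) dissolves if, instead of chasing the index $n_{\max}\equiv-p^{-1}\pmod q$, you apply your own closed form directly to the maximal orbit point: its coding is that of $\theta=(q-1)/q$ under the rigid rotation, namely $\epsilon'_{k}=\lfloor((k+1)p+q-1)/q\rfloor-\lfloor(kp+q-1)/q\rfloor$, which coincides with $\epsilon_{k}$ except at $k=0$ (where $\epsilon'_{0}=1$, $\epsilon_{0}=0$) and $k=q-1$ (where $\epsilon'_{q-1}=0$, $\epsilon_{q-1}=1$); thus $\sum_{k}\lambda^{q-1-k}\epsilon'_{k}=S+\lambda^{q-1}-1$, and $y_{q-1}\le 1$ becomes exactly $b\le\frac{1-\lambda}{1-\lambda^{q}}(S+\lambda^{q-1}-\lambda^{q})$. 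Second, be careful at the right endpoint: there $y_{q-1}=1\notin I$ (equivalently $y_{q-p-1}=c$, which is coded $1$ by right-continuity and is sent to $0$), so the periodic orbit with the prescribed coding does \emph{not} exist at $b=b_{\max}$; the closedness of the interval there cannot come from the orbit construction itself and must be obtained, as you indicate only in passing, from continuity and monotonicity of $b\mapsto\rho(R_{\lambda,b})$. With these two points made precise, the argument is complete and consistent with the cited sources.
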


It is proved in \cite{LN18} that, once $0<\lambda<1$ is fixed, the Hausdorff dimension of the set 
$$
\{b\in(0,1) : (\lambda,b)\in \mathcal{E} \}
$$
equals  zero and,  in \cite{JO19}, the authors showed that the Hausdorff dimension of the entirely exceptional set $\mathcal{E}$ equals one. 

Concerning Proposition ~\ref{pr:exc}, according to Figure 2 a similar claim holds when it is assumed that the parameter $0<b<1$ is fixed and the rotation number takes a rational value.  However, the precise definition of the endpoints of the interval where the parameter $\lambda$ varies is not yet available.  This information would allow one  to pursue the study of the diophantine properties of the rotation number of contracted rotations.
Nevertheless,  as a corollary of our main theorem,  which means using another approach, we are able to obtain the following result:

\begin{corollary}
Let $0<b<1$, then the set
$$
\{\lambda\in(0,1) : (\lambda,b)\in\mathcal{E} \}
$$
has Hausdorff dimension zero. 
\end{corollary}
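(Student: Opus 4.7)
The plan is to recognise the set in question as a section of the exceptional set $\mathfrak{E}^{(k)}$ for $k=1$ and then invoke Theorem~\ref{th:main} directly.

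First I would observe that, with the choices $\veca=(0)\in A^{(1)}$ and $\vecb=(b)\in\Rr$, the associated $1$-interval piecewise $\lambda$-affine function is $F(x)=\lambda x+b$ on $[0,1)$, and the map $f_{\veca,\vecb,\lambda}(x)=\lambda x+b\pmod{1}$ coincides with the contracted rotation $R_{\lambda,b}$ whenever $(\lambda,b)\in\Delta$, i.e. whenever $\lambda>1-b$. In the complementary range $\lambda\le 1-b$ the pair $(\lambda,b)$ lies outside $\Delta$, hence outside $\mathcal{E}$, so such $\lambda$ contribute nothing to $\{\lambda\in(0,1):(\lambda,b)\in\mathcal{E}\}$. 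I also need to check $\Zz$-independence in the sense of Definition~\ref{def:Zind}: for $k=1$ this reduces to the single condition $0-b\notin\Zz$, which holds because $b\in(0,1)$.

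Next I would establish the inclusion
$$
\{\lambda\in(0,1):(\lambda,b)\in\mathcal{E}\}\;\subseteq\;\mathfrak{E}^{(1)}_{\veca,\vecb}.
$$
For $\lambda\in(1-b,1)$ this uses the standard fact about contracted rotations (see \cite{Bu93,LN18}) that $R_{\lambda,b}$ is asymptotically periodic whenever its rotation number $\rho(R_{\lambda,b})$ is rational; contrapositively, if $\rho(R_{\lambda,b})$ is irrational then $R_{\lambda,b}=f_{\veca,\vecb,\lambda}$ fails to be asymptotically periodic, so $\lambda\in\mathfrak{E}^{(1)}_{\veca,\vecb}$. For $\lambda\le 1-b$ the inclusion is vacuous by the first paragraph.

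Finally, Theorem~\ref{th:main} applied with $k=1$ to the $\Zz$-independent pair $\veca=(0)$, $\vecb=(b)$ yields $\dim_H\mathfrak{E}^{(1)}_{\veca,\vecb}=0$, and monotonicity of Hausdorff dimension under inclusion gives $\dim_H\{\lambda\in(0,1):(\lambda,b)\in\mathcal{E}\}=0$, as desired. There is no genuine obstacle here beyond the two identifications above; the entire content of the corollary is a direct translation of the main theorem to the one-discontinuity setting of contracted rotations, which is precisely why the paper emphasises that contracted rotations are the primary concrete motivation for Theorem~\ref{th:main}.
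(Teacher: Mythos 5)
Your proposal is correct and is exactly the paper's (implicit) argument: the corollary is presented as a direct consequence of Theorem~\ref{th:main} applied with $k=1$ to the $\Zz$-independent tuples $\veca=(0)$ and $\vecb=(b)$, via the inclusion $\{\lambda:(\lambda,b)\in\mathcal{E}\}\subseteq\mathfrak{E}^{(1)}_{\veca,\vecb}$. One small repair to your justification of that inclusion: the implication you need is \emph{irrational rotation number $\Rightarrow$ not asymptotically periodic}, which is the contrapositive of \emph{asymptotically periodic $\Rightarrow$ rational rotation number} (equivalently, it follows from the standard fact that $R_{\lambda,b}$ has a periodic orbit if and only if $\rho(R_{\lambda,b})$ is rational), not the contrapositive of the implication you quoted, which would only give the converse.
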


\bigskip

The paper is organized as follows. In Section~\ref{sec:preliminary}, we present some notions and  preliminary results which will be needed throughout the paper.  In particular, we show in Proposition~\ref{lem:singent} that any  piecewise increasing contraction has zero entropy. As a corollary, we obtain that the $\omega$-limit set of any piecewise increasing contraction has zero Hausdorff dimension.
Section~\ref{sec:preliminary} concludes with  the proofs of Theorem~\ref{th:numbermain} and Corollaries~\ref{cor:circle} and~\ref{cor:nPC}.
In Section~\ref{sec:partition}, we recall the notion of invariant quasi-partition and, in Theorem~\ref{thm:partition}, we establish a sufficient condition for a   piecewise increasing contraction to be asymptotically periodic. 
Section~\ref{sec:affine} is devoted to piecewise $\lambda$-affine maps, with $0<\lambda<1$.
Finally the proof of Theorem~\ref{th:main} and Corollary~\ref{cor:main} is left to Section~\ref{sec:proof}.

\section{Preliminary results}\label{sec:preliminary}

\noindent

Throughout this section $f$ is assumed to be a piecewise increasing contraction as defined in Section~\ref{sec:intro}.

\begin{defn}
A point $x\in I$ is called a \textit{singular point of $f$} if either $x=0$ or $x$ is a discontinuity point of $f$. Given $n\geq1$, a point $x\in I$ is called a \textit{singular point of $f^n$} if there exists $0\leq j<n$ such that $f^j(x)$ is a singular point of $f$. 
We denote by $S$ the set of singular points of $f$ and by $S^{(n)}$ the set of singular points of $f^n$. 
\end{defn}

Notice that $S=S^{(1)}$ and 
$$
S^{(n)}=\bigcup_{j=0}^{n-1}f^{-j}(S),\quad \forall\,n\in\Nn.
$$

\begin{lemma}\label{lem:Sn2}
Let $n\in\Nn$. Then the set $S^{(n)}$ is finite, $f^n$ is right continuous and the restriction of $f^n$ to each connected component of $I\setminus S^{(n)}$ is increasing and $\lambda^n$-Lipschitz.
\end{lemma}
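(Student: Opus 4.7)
The plan is induction on $n$. The base case $n=1$ is immediate from the standing assumptions on $f$: $S^{(1)}=S$ is finite, $f$ is right continuous by definition of a piecewise increasing contraction, and on each connected component of $I\setminus S$ the restriction of $f$ is increasing and $\lambda$-Lipschitz.

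For the inductive step, assume the statement for $n$ and use the identity $S^{(n+1)}=S^{(n)}\cup f^{-n}(S)$. To prove finiteness of $S^{(n+1)}$, observe that by the inductive hypothesis $S^{(n)}$ is finite and $f^n$ restricted to each (open) component $J$ of $I\setminus S^{(n)}$ is strictly increasing, hence injective; therefore $f^{-n}(\{s\})\cap J$ has at most one element for every $s\in S$, so $f^{-n}(S)$ is finite. For the restriction claim, let $K$ be a connected component of $I\setminus S^{(n+1)}$. Then $K$ is contained in some component $J$ of $I\setminus S^{(n)}$, and $f^n(K)$ is an interval (by induction, $f^n|_J$ is strictly increasing and $\lambda^n$-Lipschitz, hence continuous and monotone) which is disjoint from $S$ since $K\cap f^{-n}(S)=\emptyset$. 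Consequently $f^n(K)$ sits inside a single component $L$ of $I\setminus S$, on which $f$ is strictly increasing and $\lambda$-Lipschitz. Composing, $f^{n+1}|_K = f|_L\circ f^n|_K$ is strictly increasing and $\lambda\cdot\lambda^n=\lambda^{n+1}$-Lipschitz.

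The step I expect to be the main obstacle is the right continuity of $f^{n+1}$ at an arbitrary point $x\in I$. Writing $f^{n+1}(y)=f(f^n(y))$, the inductive hypothesis gives $f^n(y)\to f^n(x)$ as $y\to x^+$, but to invoke right continuity of $f$ at $f^n(x)\in I$ one also needs $f^n(y)\geq f^n(x)$ for $y>x$ close to $x$. If $x$ lies in the interior of a component of $I\setminus S^{(n)}$, strict monotonicity of $f^n$ there yields $f^n(y)>f^n(x)$ at once. If $x\in S^{(n)}$, then for $y>x$ sufficiently close to $x$ the point $y$ belongs to the component $(x,t)$ of $I\setminus S^{(n)}$ immediately to the right of $x$; on this component $f^n$ is strictly increasing, and by the inductive right continuity of $f^n$ at $x$, the value $f^n(x)$ equals the right limit, hence the infimum of $f^n$ on $(x,t)$, so again $f^n(y)>f^n(x)$. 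In both cases $f^n(y)$ approaches $f^n(x)$ from above, so right continuity of $f$ at $f^n(x)$ delivers $f(f^n(y))\to f(f^n(x))=f^{n+1}(x)$, completing the induction.
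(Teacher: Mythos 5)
Your proof is correct and rests on the same core idea as the paper's: a component of $I\setminus S^{(n)}$ (resp.\ $I\setminus S^{(n+1)}$) is carried by the iterates of $f$ through intervals avoiding the singular set, so $f^n$ is there a composition of increasing $\lambda$-Lipschitz branches; you merely organize this as an induction where the paper argues directly, and your careful verification that $f^n(y)$ approaches $f^n(x)$ \emph{from above} (needed to invoke right continuity of $f$ at $f^n(x)$) is a valid filling-in of the step the paper dismisses as ``obvious.''
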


\begin{proof}
Notice that $S$ is finite. Because $f$ has finitely many increasing branches, the pre-image of any finite set is also finite,  therefore $f^{-j}(S)$ is finite for every $j\geq0$. Thus, $S^{(n)}$ is finite. Now, let $J$ be a connected component of $I\setminus S^{(n)}$. The sets $J, f(J), \ldots, f^{n-1}(J)$ do not contain singular points of $f$, otherwise $J$ would not be a connected component of $I\setminus S^{(n)}$. Hence, for every $0\leq j< n$, $f^j(J)$ is an interval and $f|_{f^j(J)}$ is increasing and $\lambda$-Lipschitz. This shows that $f^n|_{J}$ is increasing and $\lambda^n$-Lipschitz. Right continuity of $f^n$ is obvious. 
\end{proof}

Since $S$ is finite, we can write $S=\{s_0,s_1,\ldots, s_{N-1}\}$ where $s_0=0<s_1<\cdots<s_{N-1}<1$. Let $s_N=1$ and consider the partition of $I$,
$$
I_1=[s_0,s_1),I_2=[s_1,s_2),\ldots, I_{N}=[s_{N-1},s_N).
$$ 
On each interval $I_j$, where $1\le j\leq N$, the restriction map $f|_{I_j}$ is increasing and $\lambda$-Lipschitz. Moreover, $f$ is right continuous at singular points.

Given  $x\in I$,  we denote the corresponding \textit{itinerary} of $x$ under $f$ on the partition $\{I_j\}_{j=1}^N$ of the interval $I$
by $(i_n)_{n=0}^\infty\in\{1,\ldots,N\}^{\Nn_0}$, where $\Nn_0=\Nn\cup\{0\}$. 
It means that $f^n(x)\in I_{i_n}$ for every $n\geq0$. A tuple $(i_0,i_1,\ldots,i_{n-1})\in\{1,\ldots,N\}^{n}$ is called an \textit{itinerary of order $n$ of $f$} if it equals the first $n$ entries of an itinerary of some point $x\in I$. 
We denote by $\I_n$ the set of all itineraries of order $n$ of $f$.  Notice that,  any two points belonging to the same connected component of $I\setminus S^{(n)}$ share the same itinerary of order $n$. Indeed,  if $J$ is a subset of a connected component of $I\setminus S^{(n)}$, then as in the proof of Lemma~\ref{lem:Sn2},  we know that the sets $J,f(J),\ldots, f^{n-1}(J)$ do not contain singular points of $f$. Thus, all points in $J$ have the same itinerary of order $n$. 

Following \cite{KM06}, the \textit{singular entropy} of $f$ is defined as
$$
h_{\text{sing}}(f)=\limsup_{n\to\infty}\frac1n\log \# \I_n,
$$
where $\#  \I_n$ denotes the cardinality of the set $ \I_n$.
By a general result \cite[Theorem 2]{KM06}, we know that the singular entropy of any non-expanding conformal piecewise affine map of $\mathbb{R}^d$  equals $0$. In our present situation, we can write a rather simple proof that $h_{\text{sing}}(f)=0$ using the fact that $f$ is an interval map. 

\begin{proposition}\label{lem:singent}
The singular entropy of $f$ equals zero, i.e. $h_{\text{sing}}(f)=0$.
\end{proposition}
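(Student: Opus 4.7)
The plan is to show that $\#\I_n$ grows subexponentially in $n$, which gives $h_{\text{sing}}(f)=\limsup_n \tfrac{1}{n}\log \#\I_n=0$. By the observation made just before the definition of singular entropy, any two points in the same connected component of $I\setminus S^{(n)}$ share the same itinerary of order $n$. Since $0\in S^{(n)}$, the number of such components equals $|S^{(n)}|$, and hence $\#\I_n\le C_n:=|S^{(n)}|$. So it suffices to prove $\lim_n \tfrac{1}{n}\log C_n=0$.

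Next I would exploit Lemma~\ref{lem:Sn2} in two complementary ways. First, on each component $J$ of $I\setminus S^{(n)}$ the restriction $f^n|_J$ is increasing and $\lambda^n$-Lipschitz, so $|f^n(J)|\le \lambda^n|J|$; summing over components yields the exponentially small total bound $\sum_J |f^n(J)|\le \lambda^n$. Second, for each breakpoint $t\in S^{(n)}$, setting $j=\min\{i\ge 0:f^i(t)\in S\}$ (so $0\le j<n$), one has $f^n(t)=f^{n-j}(f^j(t))$ and therefore $f^n(t)$ lies in the finite orbit set
$$
O_n := \{f^k(s):0\le k\le n,\ s\in S\},
$$
whose cardinality is at most $(n+1)N$. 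Thus the image $f^n(S^{(n)})$ is contained in a polynomially sized subset of $I$.

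The main obstacle is to combine these two ingredients into a polynomial (or at least subexponential) bound on $C_n=|S^{(n)}|$ itself, since a priori many distinct breakpoints can share the same image in $O_n$. I would proceed inductively on $n$ and control the increment $C_{n+1}-C_n$, which counts new breakpoints: by injectivity of $f^n$ on each component $J$, this increment equals the number of pairs $(J,s)$ with $J$ a component of $I\setminus S^{(n)}$, $s\in S\setminus\{0\}$ lying in $f^n(J)$, and $s\ne f^n(t)$ for the left endpoint $t$ of $J$, i.e.\ $\sum_{s\in S\setminus\{0\}}|f^{-n}(s)|$. The hardest step is precisely showing that this sum of preimage counts grows polynomially in $n$; I would attempt this by noting that every image $f^n(J)$ containing a prescribed $s$ must be contained in $(s-\lambda^n,s+\lambda^n)$, and then leveraging the exponentially small total image length $\sum_J|f^n(J)|\le \lambda^n$ together with the injectivity of the inverse branches of $f^n$ to bound the number of such clustered images. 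Once the polynomial bound $C_n\le p(n)$ is secured, $\limsup_n \tfrac{1}{n}\log\#\I_n\le \limsup_n \tfrac{\log p(n)}{n}=0$, finishing the proof.
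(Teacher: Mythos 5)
Your opening reduction is fine: since $0\in S^{(n)}$, the interval $I\setminus S^{(n)}$ has $\#S^{(n)}$ components, each carrying one itinerary, and singular points inherit the itinerary of the component to their right, so $\#\I_n\le C_n:=\#S^{(n)}$. But this gains nothing (adjacent components in fact carry \emph{different} itineraries of order $n$, so $\#\I_n$ and $C_n$ essentially coincide), and the whole proof then rests on the step you yourself flag as the hardest: a subexponential bound on the increment $C_{n+1}-C_n=\#\bigl(f^{-n}(S)\setminus S^{(n)}\bigr)$. The mechanism you propose for that step does not work. Knowing that every image $f^n(J)$ containing $s$ lies in $(s-\lambda^n,s+\lambda^n)$, and that $\sum_J|f^n(J)|\le\lambda^n$, gives no control on \emph{how many} components $J$ satisfy $s\in f^n(J)$: because $f$ is not assumed injective, the intervals $f^n(J)$ for distinct $J$ may overlap arbitrarily, and their individual lengths can be arbitrarily small compared with $\lambda^n$, so unboundedly many of them could contain the same point $s$. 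Injectivity of $f^n$ on each single component only yields $\#\bigl(f^{-n}(s)\cap J\bigr)\le 1$, hence $\#f^{-n}(s)\le C_n$ and $C_{n+1}\le(1+\#S)\,C_n$ --- precisely the exponential bound you are trying to beat. The induction therefore does not close. (It is also unclear that a \emph{polynomial} bound on $C_n$ even holds for general non-injective piecewise increasing contractions; fortunately the proposition only needs subexponential growth.)

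The missing idea, which is how the paper argues, avoids preimage counting entirely. Fix $\rho>1$, set $m=\lceil\log 2/\log\rho\rceil$, and let $\tau(m)>0$ be the minimal gap between singular points of $f^m$. For $n\ge n_0:=\lceil\log\tau/\log\lambda\rceil$, the set of points sharing a given itinerary of order $n$ is an interval on which $f^n$ is increasing and $\lambda^n$-Lipschitz, so its image has length less than $\tau(m)$ and hence meets at most one point of $S^{(m)}$; by the "at most two local itineraries" claim this forces $\#\I_{n+m}\le 2\,\#\I_n$, whence $\#\I_n\le C\,2^{n/m}\le C\rho^n$ and $h_{\mathrm{sing}}(f)\le\log\rho$ for every $\rho>1$. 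If you want to salvage your write-up, replace the polynomial-growth claim by this doubling estimate over blocks of length $m$.
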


\begin{proof}
Given $\rho>1$, let $m=\lceil\log 2 / \log \rho\rceil$ and $\tau=\tau(m)>0$ be the smallest distance between any two singular points of $f^m$.  We claim that the number of local distinct itineraries of order $m$ of $f$ is at most two, i.e.,  the points in any interval $J$ whose length is less than $\tau$ define at most two distinct itineraries of order $m$ of $f$.   Indeed, if $J$ is a subset of a connected component of $I\setminus S^{(m)}$, then all points in $J$ share the same itinerary of order $m$. Otherwise, if $J$ contains a singular point $c\in S^{(m)}$, then both $J^-=J\cap[0,c)$ and $J^+=J\cap (c,1)$ are subsets of connected components of $I\setminus S^{(m)}$. This gives at most two distinct itineraries of order $m$ for points in $J\setminus\{c\}$.  Because $f$ is piecewise increasing  and right continuous, the itinerary of order $m$ of the singular point $c$ coincides with that of $J^+$.

Let $n_0=\lceil \log\tau/\log\lambda\rceil$. Clearly, the length of $f^{n}(W)$ is less than $\tau$, for every connected component $W$ of $I\setminus S^{(n)}$ whenever $n\geq n_0$.

Set $\alpha_n:=\# \mathcal{I}_n$. Therefore,   it follows from the previous claim that $\alpha_{n+m}\leq 2\,\alpha_n$, for every $n\geq n_0$. So $\displaystyle \alpha_{n_0+im}\leq 2^i \alpha_{n_0}$, for every $i\geq 0$. Thus,
$$
\alpha_n\leq  2^{\frac{n-n_0}{m}}\alpha_{n_0},\; \mbox{for every}  \;\, n\geq n_0.
$$
Taking into account the choice of $m$, we get $\alpha_n\leq C \rho^n$, for every $n\geq n_0$, where $C:=2^{-n_0/m}\alpha_{n_0}$. Hence, $\frac1n\log \alpha_n\leq \frac1n\log C+  \log \rho$, which implies that $\limsup_n\frac1n\log \alpha_n\leq \log\rho$.  As $\rho>1$ can be chosen arbitrarily close to $1$, this shows that $h_{\text{sing}}(f)=0$.
\end{proof}

As $f$ has zero singular entropy,  we obtain the following corollary from \cite[Proposition 6.6]{CGMU16}.

\begin{corollary}\label{cor:disco}
The set $\omega(f)$ has Hausdorff dimension zero. In particular, $\omega(f)$ is a totally disconnected set. 
\end{corollary}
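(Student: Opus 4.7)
The plan is to deduce the Hausdorff dimension bound directly from the singular entropy estimate in Proposition~\ref{lem:singent}, essentially reproducing in this simpler setting the argument of \cite[Proposition~6.6]{CGMU16}. The strategy is to cover $\omega(f)$ at scale $\lambda^n$ by at most $\#\I_n$ intervals, and then exploit $\#\I_n = e^{o(n)}$ to kill any prescribed $s>0$ in the Hausdorff-content computation.

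For each itinerary $\mathbf{i}=(i_0,\ldots,i_{n-1})\in \I_n$, let $E_{\mathbf{i}}=\{x\in I\colon f^j(x)\in I_{i_j},\ 0\le j<n\}$. First I would verify that every non-empty $E_{\mathbf{i}}$ is an interval: given $x<y$ in $E_{\mathbf{i}}$ and $z\in(x,y)$, a straightforward induction on $j$ shows $f^j(z)\in(f^j(x),f^j(y))\subseteq I_{i_j}$, using only that each branch $f|_{I_{i_j}}$ is strictly increasing and that $I_{i_j}$ is an interval. Since each branch is $\lambda$-Lipschitz and all iterates of points in $E_{\mathbf{i}}$ stay in the appropriate pieces, the same induction yields that $f^n$ is $\lambda^n$-Lipschitz on $E_{\mathbf{i}}$, so $f^n(E_{\mathbf{i}})$ is contained in an interval of diameter at most $\lambda^n$.

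Since $f(I)\subseteq I$, the sequence $\overline{f^n(I)}$ is decreasing and contains $\omega(f)$. Using the disjoint partition $I=\bigsqcup_{\mathbf{i}\in\I_n}E_{\mathbf{i}}$, the family $\{\,\overline{f^n(E_{\mathbf{i}})}:\mathbf{i}\in\I_n\,\}$ is a cover of $\omega(f)$ by at most $\#\I_n$ intervals of diameter $\le\lambda^n$. Consequently, for any $s>0$,
\[
\mathcal{H}^s_{\lambda^n}(\omega(f))\ \le\ \#\I_n\cdot\lambda^{sn}.
\]
Fix $\epsilon\in(0,\,s|\log\lambda|)$. By Proposition~\ref{lem:singent}, $\#\I_n\le e^{\epsilon n}$ for $n$ sufficiently large, so the right-hand side is bounded by $e^{(\epsilon+s\log\lambda)n}\to 0$. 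Hence $\mathcal{H}^s(\omega(f))=0$ for every $s>0$, which gives $\dim_H\omega(f)=0$.

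The last assertion is then automatic: any subset of $\Rr$ of zero Hausdorff dimension contains no non-degenerate interval, so its connected components reduce to points and the set is totally disconnected. The main delicate point in this plan is the proof that each $E_{\mathbf{i}}$ is an interval; this crucially relies on the strict monotonicity of the branches, and without it the cells could split into many pieces and $\#\I_n$ would no longer control the size of the cover.
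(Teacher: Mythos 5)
Your proof is correct. Note that the paper does not actually prove this corollary: it simply invokes Proposition~\ref{lem:singent} ($h_{\mathrm{sing}}(f)=0$) together with the external result \cite[Proposition 6.6]{CGMU16}, which is precisely the implication ``zero singular entropy $\Rightarrow$ zero Hausdorff dimension of the attractor'' for piecewise contractions. What you have done is unpack that black box in the one-dimensional setting, and every step checks out: the itinerary cells $E_{\mathbf{i}}$ are intervals because the branches are increasing and the partition elements $I_{i_j}$ are intervals (this is where you correctly flag the only delicate point); $f^n$ is $\lambda^n$-Lipschitz on each cell, so $\overline{f^n(E_{\mathbf{i}})}$ has diameter at most $\lambda^n$; $\omega(f)\subseteq\overline{f^n(I)}=\bigcup_{\mathbf{i}\in\I_n}\overline{f^n(E_{\mathbf{i}})}$ for every $n$ since the union is finite; and the estimate $\mathcal{H}^s_{\lambda^n}(\omega(f))\le \#\I_n\,\lambda^{sn}$ combined with $\#\I_n=e^{o(n)}$ from Proposition~\ref{lem:singent} kills every $s>0$. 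The deduction of total disconnectedness is also fine. So the two approaches rest on the same key input (zero singular entropy) and the same covering idea; yours buys self-containedness at the cost of redoing the covering argument that \cite{CGMU16} provides in greater generality.
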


\begin{remark} Note that the above claim is trivial whenever $f$ is asymptotically periodic, as in this case $\omega(f)$ is a finite set.
\end{remark}
\begin{remark} In the case $f$ is a contracted rotation with an irrational rotation number~\eqref{eq:contractedrotation}, it is proved in \cite{JO19} that the closure of the limit set $C=\overline{\omega(f)}$ is a Cantor set. Moreover, in the particular case, where the slope of $f$  equals $ \lambda=\frac{1}{n}$,  $n=2,3, \dots$, it is showed in \cite{BKLN20} that every point  $x\in C$, but $x=0$ or $x=1$, is a transcendental number.
\end{remark}

Let $\text{Per}(f)$ denote the set of periodic points of $f$ and $O(f,x)$ denote the forward orbit of $x$ under $f$, i.e., $O(f,x)=\{f^n(x): n\geq 0\}$. We introduce the following equivalence relation $\sim$ on the set of singular points of $f$:
\begin{equation}\label{eqclass}
\mbox{ if $x,y\in S$ we write $x\sim y$ if and only if } \; \omega(f,x) =  \omega(f,y).
\end{equation}
Define the quotient set $A(f):=S/\sim$. 

The following theorem gives an upper bound for the number of periodic orbits of $f$.

\begin{theorem}\label{thm:number}
Assume  $\text{Per}(f)$ is a nonempty set. Then
there is a map $$\psi: \text{Per}(f) \to A(f)$$ with the following property: $\psi (x)=\psi(y)$ if, and only if, $O(f,x)=O(f,y)$, i.e. $x$ and $y$ belong to the same periodic orbit. 
\end{theorem}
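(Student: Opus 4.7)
The plan is to define $\psi(p):=[s]\in A(f)$, where $s\in S$ is a singular point of $f$ satisfying $\omega(f,s)=O(f,p)$. Once the existence of such an $s$ is established, the rest of the theorem is easy: well-definedness follows from \eqref{eqclass}, since any two admissible choices $s,s'$ have $\omega(f,s)=\omega(f,s')$ and are therefore $\sim$-equivalent; constancy of $\psi$ on a periodic orbit holds because points in the same orbit share the same set $O(f,\cdot)$, so the same $s$ is admissible for all of them; and the separating direction $\psi(p)=\psi(p')\Rightarrow O(f,p)=O(f,p')$ is immediate from $O(f,p)=\omega(f,s)=\omega(f,s')=O(f,p')$ for any representatives $s\sim s'$.

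To produce such an $s$ for a given periodic point $p$ of period $q$, I would split into two cases. If $p\in S^{(q)}$, then by definition some iterate $f^{j}(p)$ with $0\le j<q$ lies in $S$, and since $p$ is periodic, $\omega(f,f^{j}(p))=O(f,p)$; set $s:=f^{j}(p)$. If $p\notin S^{(q)}$, then by Lemma~\ref{lem:Sn2} the point $p$ belongs to some connected component $W=(\alpha,\beta)$ of $I\setminus S^{(q)}$ on which $f^{q}$ is increasing and $\lambda^{q}$-Lipschitz, with fixed point $p$. A standard contraction argument then gives $f^{nq}(x)\to p$ for every $x\in W$. To push this convergence onto the left endpoint $\alpha$, I would combine right-continuity of $f^{q}$ at $\alpha$ with the Lipschitz bound on $W$ to obtain $|f^{q}(\alpha)-p|\le\lambda^{q}(p-\alpha)$ and $f^{q}(\alpha)>\alpha$, so that $f^{q}(\alpha)\in(\alpha,p]\subseteq W$. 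Iterating and using $f^{q}(W)\subseteq W$ yields $f^{nq}(\alpha)\to p$, hence $\omega(f,\alpha)=O(f,p)$. Since $0\in S$, the endpoint $\alpha$ necessarily lies in $S^{(q)}$, so there is a smallest $0\le j_{0}<q$ with $f^{j_{0}}(\alpha)\in S$; setting $s:=f^{j_{0}}(\alpha)$, one has $\omega(f,s)=\omega(f,\alpha)=O(f,p)$, as required.

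The main obstacle is the step of placing $f^{q}(\alpha)$ back inside the contracting component $W$ in the second case, since $\alpha$ itself sits outside $W$ and $f^{q}$ is only known to be Lipschitz on the open component. The trick is that right-continuity of $f^{q}$ at $\alpha$ (inherited from $f$) forces the Lipschitz estimate to persist in the limit, giving $|f^{q}(\alpha)-p|\le\lambda^{q}(p-\alpha)$, and the strict inequality $\lambda^{q}<1$ together with $p-\alpha>0$ then pushes $f^{q}(\alpha)$ strictly above $\alpha$. Once this one iterate is back in $W$, the contraction dynamics take over and the remainder of the argument reduces to bookkeeping about singular points along the orbit of $\alpha$.
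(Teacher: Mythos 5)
Your proof is correct and follows essentially the same route as the paper: both assign to each periodic point the equivalence class of the first singular point reached along the orbit of the left endpoint of the cell of $I\setminus S^{(q)}$ containing it, using the contraction of $f^{q}$ on that cell to show this singular point has the periodic orbit as its $\omega$-limit set. The only difference is presentational: the paper works with half-open cells $[c_j,c_{j+1})$ so the left endpoint is already inside the contracting cell, whereas you use open components and supply the (correct) right-continuity limit argument to pull $f^{q}(\alpha)$ back into $W$.
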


\begin{proof}

Let $x\in \text{Per}(f)$ with period $p\in\Nn$. Since $S^{(p)}$ is finite (see Lemma~\ref{lem:Sn2}), we can write $S^{(p)}=\{c_0,c_1,\ldots, c_{m-1}\}$ where $c_0=0<c_1<\cdots<c_{m-1}<1$. Let $c_m=1$ and consider the partition of $I$,
$$
W_0=[c_0,c_1),W_1=[c_1,c_2),\ldots, W_{m-1}=[c_{m-1},c_m).
$$ 
There is a unique $\ell(x)\in\{0,\ldots, m-1\}$ such that $x\in W_{\ell(x)}$. Since $f^p(x)=x$ and $f^p|_{W_{\ell(x)}}$ is increasing and $\lambda^p$-Lipschitz (see Lemma~\ref{lem:Sn2}), we conclude that $f^{p}(W_{\ell(x)})\subset W_{\ell(x)}$ and $\omega(f,z) = O(f,x)$ for every $z\in W_{\ell(x)}$. In particular, $\omega(f,c_{\ell(x)})=O(f,x)$. Because $c_{\ell(x)}$ is a singular point of $f^p$, we may define
$$
n(x):= \min\{q\geq 0\colon f^q(c_{\ell(x)})\in S\}.
$$
Notice that, $n(x) < p$. Recall that $N=\# S$. Let $\kappa(x)\in \{0,\ldots, N-1\}$ be such that $s_{\kappa(x)}= f^{n(x)}(c_{\ell(x)})$. 
Finally, define
$$
\psi: x\in \text{Per}(f) \mapsto [s_{\kappa(x)}]\in A(f),
$$
where [$s_{\kappa(x)}$] means the equivalence class of $s_{\kappa(x)}$ \eqref{eqclass}.  Now it is easy to see that $\psi(x)=\psi(y)$ for two periodic points $x,y\in  \text{Per}(f) $, if and only if $O(f,x)=O(f,y)$. Indeed, suppose that $s_{\kappa(x)}\sim s_{\kappa(y)}$ for two periodic points $x,y\in \text{Per}(f)$. Then,  $\omega(f,f^{n(x)}(c_{\ell(x)}))=\omega(f,f^{n(y)}(c_{\ell(y)}))$ which implies that $$O(f,x)=\omega(f,c_{\ell(x)})=\omega(f,f^{n(x)}(c_{\ell(x)}))=\omega(f,f^{n(y)}(c_{\ell(y)}))=\omega(f,c_{\ell(y)})=O(f,y).$$ 

\end{proof}

\begin{remark}
Theorem~\ref{thm:number} implies that any piecewise increasing contraction $f$ has at most $\# A(f)$ periodic orbits, where $\# A(f)$ denotes the cardinality of the set $A(f)$ which is at most the number of domains of continuity of $f$. 
\end{remark}

\subsection{Proof of Theorem~\ref{th:numbermain}}
The discontinuity points of $f$ whose image  equals zero belong to the equivalence class of zero (in the sense defined in \eqref{eqclass}). This implies that $\# A(f)\leq n+1-\ell$ where $\ell=\#\{x\in S\setminus\{0\}\colon x\sim 0\}$. Therefore, by Theorem~\ref{thm:number}, $f$ has at most $n+1-\ell$ periodic orbits.
\subsection{Proof of Corollary~\ref{cor:circle}}

Identifying the circle $\Rr/\Zz$ with the interval $I$ through the canonical bijection $I\hookrightarrow \Rr \to \Rr/\Zz$, we may conjugate $f$ to an interval map $\tilde{f}\colon I\to I$ which is a piecewise increasing contraction. By further rotating the circle, we assume that $0$ is a discontinuity of $f$.  Hence, $\tilde{f}$ has $k-1$ discontinuity points plus some possible extra discontinuity points whose image under $\tilde{f}$ equals zero. By Theorem~\ref{th:numbermain}, these new discontinuity points only decrease the number of periodic orbits. Therefore, $\tilde{f}$ has at most $k$ periodic orbits, and the same is true for $f$.

\subsection{Proof of Corollary~\ref{cor:nPC}}

The piecewise $\lambda$-affine map $f:I\to I$ is a piecewise increasing contraction. Since $F$ is a $k$-interval piecewise $\lambda$-affine function, $f$ has  $k-1$ discontinuity points plus some extra discontinuity points due to the mod 1. These extra discontinuity points have zero image under $f$ (since $F$ at those points takes an integer value). Therefore, by Theorem~\ref{th:numbermain}, $f$ has at most $k$ periodic orbits.

\section{Invariant quasi-partition}\label{sec:partition}
\noindent
Now we recall the notion of {\it  invariant quasi-partition}   \cite[Definition 2.7]{NPR18}.

\begin{defn} Let $f:I \to I$ be an interval map and $m$ a positive integer. Let $\mathcal{P}=\{J_1,\ldots,J_m\}$ be a collection of $m$ pairwise disjoint open subintervals of the interval $I$. We say that  $\mathcal{P}$ is an {\it invariant quasi-partition of $I$ under $f$}, if it satisfies the following properties:
\begin{enumerate}
\item[(P1)] $\displaystyle I \setminus \bigcup_{i=1}^m J_i$ contains at most finitely many points;
\item[(P2)] For every $\ell=1,\ldots,m$, there is $1\le \tau(\ell)\le m$ such that $f(J_\ell)\subset J_{\tau(\ell)}$.
\end{enumerate}
\end{defn}

Throughout the rest of this section we assume that $f\colon I\to I$ is a piecewise increasing contraction as defined in Section~\ref{sec:intro}.
Below we state a result  \cite[Lemma 2.8]{NPR18} which assures the existence of an invariant quasi-partition for $f$. For the convenience of the reader we include here a proof. Recall that $S$ is the set of singular points of $f$.

\begin{lemma}\label{lem:quasi-partition}
 If the set
\begin{equation}\label{eq:Q}
Q:=\bigcup_{n\geq0}f^{-n}(S)
\end{equation}
is finite, then $f$ admits an  invariant quasi-partition.
\end{lemma}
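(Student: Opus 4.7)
The plan is to use the finite set $Q$ itself to cut $I$ into the intervals of the quasi-partition. Enumerate $Q=\{q_0,q_1,\ldots,q_{M-1}\}$ with $0=q_0<q_1<\cdots<q_{M-1}<1$ (note $0\in S\subset Q$), set $q_M=1$, and define
\[
J_\ell := (q_{\ell-1},q_\ell),\qquad \ell=1,\ldots,M.
\]
Then $I\setminus\bigcup_{\ell=1}^M J_\ell=\{q_0,\ldots,q_{M-1}\}$ is finite, giving (P1).

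For (P2) the key observation is that $Q$ is backward invariant, i.e.\ $f^{-1}(Q)\subset Q$. Indeed, if $y\in Q$ then $f^n(y)\in S$ for some $n\geq 0$; for any $x\in f^{-1}(y)$, $f^{n+1}(x)\in S$, so $x\in f^{-(n+1)}(S)\subset Q$. Now fix $\ell$ and consider $J_\ell$. Since $J_\ell\cap Q=\emptyset$ and $S\subset Q$, the interval $J_\ell$ is contained in a single domain of continuity of $f$, so by assumption $f|_{J_\ell}$ is continuous and strictly increasing, and in particular $f(J_\ell)$ is a (connected) subinterval of $I$. Moreover $f(J_\ell)\cap Q=\emptyset$: if $f(x)\in Q$ for some $x\in J_\ell$, then $x\in f^{-1}(Q)\subset Q$, contradicting $J_\ell\cap Q=\emptyset$.

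To conclude, notice that the complement $I\setminus Q$ equals the disjoint union $J_1\sqcup\cdots\sqcup J_M$. Thus $f(J_\ell)$ is a connected subset of this disjoint union of open intervals, and must therefore lie in exactly one of them, say $J_{\tau(\ell)}$, which yields (P2).

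There is no serious obstacle in this argument; it is essentially a bookkeeping exercise once one notes the backward invariance $f^{-1}(Q)\subset Q$. The only small point requiring care is checking that $f(J_\ell)$ is genuinely connected, which follows because $J_\ell$ lies entirely inside one branch of continuity of $f$ (any singular point in $J_\ell$ would already belong to $S\subset Q$, contradicting $J_\ell\cap Q=\emptyset$); from that, the containment in a single $J_{\tau(\ell)}$ is immediate.
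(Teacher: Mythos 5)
Your proof is correct and follows essentially the same route as the paper: take the connected components of $I\setminus Q$ as the quasi-partition, verify (P1) from finiteness of $Q$, and deduce (P2) from the backward invariance $f^{-1}(Q)\subset Q$. The only difference is that you spell out explicitly why $f(J_\ell)$ is connected (each $J_\ell$ lies in a single continuity branch since $S\subset Q$), a point the paper leaves implicit.
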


\begin{proof}
Let $\mathcal{P}=
\{J_\ell\}_{\ell=1}^{m}$ denote the finite collection of all connected components of $I\setminus Q$. Since $Q$ is finite, $I\setminus \bigcup_{i=1}^{m}J_i$ contains at most finitely many points, thus $\mathcal{P}$ verifies (P1).
In order to prove (P2), suppose by contradiction that $f(J_\ell)\cap Q\neq \emptyset$ for some $1\le \ell \le m$. Then $J_\ell \cap f^{-1}(Q)\neq \emptyset$. Using the fact that $f^{-1}(Q)\subset Q$, we conclude that $J_\ell\cap Q\neq\emptyset$, which contradicts the definition of $J_\ell$. 
\end{proof}

In the following we use the notation $\displaystyle f(x^\pm)=\lim_{y\to x^{\pm}}f(y)$.

\begin{defn}\label{def:percon}
A point $x\in S\cup\{1\}$ is called a \textit{left periodic singular point of $f$} if there exists $n\in\Nn$ such that $f^n(x^-)=x$.
\end{defn}

The following theorem is adapted from the results of \cite{NPR18}. 

\begin{theorem}\label{thm:partition}
Let $f$ be a piecewise increasing contraction with no left periodic singular point. If $Q$ is finite, then $f$ is asymptotically periodic.
\end{theorem}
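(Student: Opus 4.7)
The plan is to use the invariant quasi-partition provided by Lemma~\ref{lem:quasi-partition} to reduce the asymptotic analysis to the Banach contraction principle on each $\tau$-cycle, with the no-left-periodic-singular-point hypothesis invoked precisely to rule out the only case in which the contractive fixed point could fail to be a genuine periodic point of $f$. First I would invoke Lemma~\ref{lem:quasi-partition} to obtain a quasi-partition $\{J_1,\ldots,J_m\}$ together with the index map $\tau$ satisfying $f(J_\ell)\subset J_{\tau(\ell)}$. Since $\tau$ is a self-map of a finite set, every $\tau$-orbit eventually enters a cycle, so it suffices to analyze the dynamics on one such cycle $\ell_1\to\cdots\to\ell_p\to\ell_1$. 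Writing $J_{\ell_i}=(a_i,b_i)$, the restriction $f|_{J_{\ell_i}}$ is $\lambda$-Lipschitz and increasing on an interval free of singular points of $f$, hence extends continuously to a $\lambda$-Lipschitz increasing map $\bar{f}_i\colon[a_i,b_i]\to[a_{i+1},b_{i+1}]$. The composition $g:=\bar{f}_p\circ\cdots\circ\bar{f}_1$ is then a $\lambda^p$-contraction of $[a_1,b_1]$ with a unique fixed point $x^*\in[a_1,b_1]$.

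The heart of the proof is to show $x^*\neq b_1$, the case where $f$ and $\bar{f}_i$ may disagree. Suppose $x^*=b_1$. Each $\bar{f}_i$ is strictly increasing with $\bar{f}_i(b_i)\leq b_{i+1}$, and a single strict inequality $\bar{f}_i(b_i)<b_{i+1}$ propagates through the subsequent (strictly increasing) extensions, so $g(b_1)=b_1$ forces $\bar{f}_i(b_i)=b_{i+1}$ for every $i$, that is $f(b_i^-)=b_{i+1}$ throughout the cycle. If $b_1=1$, then $f^p(1^-)=1$ makes $1\in S\cup\{1\}$ a left periodic singular point, contradicting the hypothesis. Otherwise $b_1\in Q$: if none of the $b_i$ were in $S$, then $f$ would be continuous at each $b_i$, so $f(b_i)=f(b_i^-)=b_{i+1}$ would exhibit $\{b_1,\ldots,b_p\}$ as a genuine $f$-periodic orbit; but each $b_i\in Q$ must have some forward $f$-iterate in $S$, and periodicity then forces some $b_j\in S$, contradicting the assumption. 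Hence some $b_i\in S$, and cyclically relabeling the cycle we may take $b_1\in S$; the identity $f^p(b_1^-)=b_1$ then contradicts the hypothesis. Consequently $x^*\in[a_1,b_1)$.

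The remainder is routine. If $x^*\in J_{\ell_1}$, then $f^p$ and $g$ coincide near $x^*$ and so $x^*$ is a periodic point of $f$; if $x^*=a_1$, the analogous monotonicity argument forces $\bar{f}_i(a_i)=a_{i+1}$ for every $i$, and right continuity of $f$ at each $a_i$ gives $f(a_i)=\bar{f}_i(a_i)=a_{i+1}$, exhibiting $\{a_1,\ldots,a_p\}$ as a periodic $f$-orbit. In either case, Banach contraction gives $g^n(x)\to x^*$ for every $x\in[a_1,b_1]$, and continuity of the extended maps along the cycle implies $\omega(f,x)$ equals the periodic $f$-orbit through $x^*$ for every $x\in J_{\ell_1}$. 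Propagating along the pre-cycle portion of the $\tau$-orbit extends this conclusion to every $x\in I\setminus Q$. For $x\in Q$, the forward $f$-orbit either eventually enters some $J_\ell$ (handled above) or remains forever in the finite set $Q$ and is therefore eventually periodic; in either case $\omega(f,x)$ is a periodic orbit. Finiteness of the $\tau$-cycles and of $Q$ shows that $\omega(f)$ is a finite union of periodic orbits, proving asymptotic periodicity. I expect the case analysis ruling out $x^*=b_1$ to be the main obstacle, especially the subcase $b_1\in Q\setminus S$, where one must convert a left-sided $\bar{f}$-cycle into a genuine $f$-periodic orbit in order to reach a contradiction with the defining property of $Q$.
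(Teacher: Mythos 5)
Your proposal is correct and follows essentially the same route as the paper: invariant quasi-partition from Lemma~\ref{lem:quasi-partition}, Banach contraction along each $\tau$-cycle, and exclusion of the right-endpoint fixed point via the no-left-periodic-singular-point hypothesis. Your endpoint-exclusion step (forcing $f(b_i^-)=b_{i+1}$ around the cycle and locating the first $b_j\in S$, using $b_1\in Q$ to guarantee one exists) is just a reorganization of the paper's argument, which instead follows the forward orbit of the single endpoint $d$ until it first meets $S$ and shows that point is left periodic; the only cosmetic omission is that a $b_i=1$ could occur elsewhere in the cycle than at $b_1$, which is handled by the same relabeling you already use.
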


\begin{proof}
Let $\mathcal{P}=
\{J_\ell\}_{\ell=1}^{m}$ denote the invariant quasi-partition of $f$ constructed in Lemma~\ref{lem:quasi-partition}, i.e., $\mathcal{P}$ is the finite collection of the connected components of $I\setminus Q$.
Notice that, since $S\subset Q$ (see \eqref{eq:Q}), for every $\ell\in\{1,\ldots,m\}$ there is $\eta(\ell)\in\{1,\ldots,N\}$ such that  $J_\ell\subset I_{\eta(\ell)}$. 

First we show that $\omega(f,x)$ is a periodic orbit for every $x\in I$. We consider two cases:

\begin{enumerate}
\item[(i)]  If $x\in I\setminus Q$, then there is a  sequence $(\ell_n)_{n=0}^\infty \in \{1,\ldots,m\}^{\Nn_0}$ such that $f^n(x)\in J_{\ell_n}$ and $\ell_{n+1}=\tau(\ell_n)$, for every $n\geq0$. 
Clearly, the sequence $(\ell_n)_{n=0}^\infty$ is eventually periodic, i.e., there exist $k\geq0$ and $p\geq1$ such that $\ell_{k +p}=\ell_k$. Therefore, $f^{p}(J_{\ell_k})\subset J_{\ell_{k+p}}=J_{\ell_k}$. Let $J_{\ell_k}=(c,d)$ where $c,d\in Q\cup\{1\}$. Since $f^p|_{(c,d)}$ is $\lambda^p$-Lipschitz  (Lemma~\ref{lem:Sn2}), and extends to $[c,d]$ by uniform continuity, there exists a unique $z\in [c,d]$ such that $\omega(f^p,x)=\{z\}$.  We want to show that $f^p(z)=z$  which means that $z$ is a periodic point of $f$ and $\omega(f,x)=O(f,z)$. 
We claim that $z\in[c,d)$, which implies that $f^p(z)=z$ because $f^p$ is right continuous by Lemma~\ref{lem:Sn2} (here we use the hypothesis that $f$ is piecewise increasing).  Now we prove the claim. Suppose, by contradiction, that $z=d$. Then $f^p(d^-)=d$. If $d=1$, then $1$ is a left periodic singular point of $f$, thus contradicting the fact that $f$ has no left periodic singular point. So we may suppose that $d<1$. Because $d\in Q$, we  define
$$
r=\min\{n\geq0\colon f^n(d)\in S\}.
$$ 
Notice that $r<p$. Indeed, if $r\geq p$, then $d\notin S^{(p)}$, which means that $f^p$ is continuous at $d$, i.e., $f^p(d)=f^p(d^-)=d$. Hence $O(f,d)\cap S=\emptyset$, contradicting the fact that $d\in Q$. Now take any increasing sequence $x_n\nearrow d$. We know that $f^p(x_n)\to d$. Since $f^r$ is continuous at $d$, we obtain
\begin{align*}
f^r(d)=f^r(f^p(d^-))&=f^r(\lim_{n\to\infty}f^p(x_n))\\
&=\lim_{n\to\infty}f^r(f^p(x_n))\\
&=\lim_{n\to\infty}f^p(f^r(x_n))\\
&=f^p(f^r(d)^-),
\end{align*}
where in the last equality we have used the fact that $f$ is piecewise increasing.
Because $f^r(d)$ is a singular point of $f$, we conclude that $f^r(d)$ is a left periodic singular point of $f$,  which contradicts the hypothesis. Hence, $\omega(f,x)$ is a periodic orbit. 

\item[(ii)] If $x\in Q$, then either $f^{n}(x)\in Q$ for every $n\geq0$, which implies that the orbit of $x$ is finite,  hence eventually periodic, or else there is $k\geq1$ such that $f^k(x)\in I\setminus Q$. In the later case we reduce to the case (i). Therefore, $\omega(f,x)$ is a periodic orbit. 
\end{enumerate}

Finally, as $\mathcal{P}$ is finite, the map $f$ has at most  finitely many periodic orbits. This shows that $f$ is asymptotically periodic.

\end{proof}

\begin{remark}
When  $f\colon I\to I$ is a piecewise contraction not necessarily piecewise increasing and possibly with left periodic singular points, the first part of the proof of Theorem~\ref{thm:partition} shows that if $Q$ is finite,  then for every connected component $J$ of $I\setminus Q$ there is a unique $z\in \overline{J}$ such that $\omega(f,x))=\omega(f,y)$ for every $x,y\in J$ and $\omega(f,x)$ is  finite  for every $x\in J$. This implies that $\omega(f)$ is a finite set provided $Q$ is also finite.  
\end{remark}

\begin{remark}
The left periodic singular point hypothesis in Theorem~\ref{thm:partition}  cannot be removed as the following example shows. 
Let $f:I\to I$ 
be the piecewise increasing contraction,
$$
f(x)=\begin{cases}
\frac{x}{2} +\frac14,&  0\leq x<\frac12\\
\frac{x}{2}-\frac14,&  \frac12\leq x<1\\
\end{cases}.
$$
The map $f$ admits an invariant quasi-partition since $Q = \{0,\frac12\}$. Moreover, as $f(\frac12^-)=\frac12$ the discontinuity is a left periodic singular point of $f$. Therefore, Theorem~\ref{thm:partition} cannot be applied to this case. In fact, $f$ is not asymptotically periodic as the $\omega$-limit set of any point equals $\{\frac12\}$ and the orbit of the discontinuity is not periodic. 
\end{remark}

\section{Piecewise $\lambda$-affine contractions}\label{sec:affine}
\noindent
Given $k\in \Nn$, let $F_\lambda: I \to \Rr$  be the $k$-interval piecewise $\lambda$-affine function defined by the tuples $\veca=(0,a_1,\ldots,a_{k-1})$ and $\vecb=(b_1,\ldots,b_k)$  (see \eqref{eq:FF}). Let $f_\lambda: I \to I$ be the piecewise $\lambda$-affine map defined by $f_\lambda=F_\lambda \pmod{1}$.

Throughout this section  $f_\lambda\colon I\to I$ is a piecewise $\lambda$-affine map 
 as defined above. We will fix the tuples $\veca$ and $\vecb$ and let $\lambda$ vary in $(0,1)$. Notice that $f_\lambda$ is a piecewise increasing contraction as defined in Section~\ref{sec:preliminary}.
 
It is convenient to set $a_0=0$ and $a_k=1$.  Recall, from Definition~\ref{def:Zind}, that the tuples $\veca$ and $\vecb$ are called $\Zz$-independent if and only if
$$
a_{i}-b_j\notin\Zz,\quad\forall\, (i,j)\in\{1,\ldots,k\}^2.
$$
Following the terminology in Section~\ref{sec:preliminary}, we denote the set of singular points of $f_\lambda$ by $S_\lambda$, and, given $n\in\Nn$, we denote the set of singular points of $f_\lambda^n$ by $S_\lambda^{(n)}$. We also denote 
 by $N_\lambda$ the number of connected components of $I\setminus S_\lambda^{(1)}$. Notice that $S_\lambda=S_\lambda^{(1)}$. As in Section~\ref{sec:preliminary}, the set of singular points of $f_\lambda$ defines a collection of intervals $I_j=[s_{j-1},s_j)$, $1\leq j\leq N_\lambda$, which forms a partition of $I$.
On each interval $I_j$, where $1\le j\leq N_\lambda$, according to \eqref{eq:FF} the map $f_\lambda$ takes the expression 
\begin{equation}\label{eq:f}
f_\lambda(x)=\varphi_j(x),\quad \forall x\in I_j,
\end{equation}
with $\varphi_j\colon \Rr\to\Rr$ defined by $\varphi_j(x):=\lambda x + \delta_j$,
where $\displaystyle \delta_j = \beta_j+p_j$  for some  $p_j \in \Zz$ and $\beta_j\in\{b_1,\ldots,b_k\}$.   Notice that $|\delta_j|\leq 1$ for every $1\leq j\leq N_\lambda$.  Similarly to $N_\lambda$, the parameters $\delta_j$ defining $\varphi_j$ may vary with $\lambda$. Indeed, the set of such $\lambda$'s where both $N_\lambda$ and the parameters $\delta_j$ change value are contained in the set
\begin{equation}\label{eq:V}
\mathcal{V}:=\{\lambda\in(0,1)\colon \exists \, 1\le j \le k \; \mbox{ such that} \;  F_\lambda(a_{j}^-) \in\Zz\text{ or } F_\lambda(a_j^+)\in \Zz\}.
\end{equation}
Notice that, $\mathcal{V}$ is a finite set and both $N_\lambda$ and the parameters $\delta_j$ remain constant as $\lambda$ varies inside each connected component of $(0,1)\setminus \mathcal{V}$. 

A crucial step in the proof of Theorem~\ref{th:main} is to estimate  the growth of a larger set of itineraries which contains all itineraries of nearby maps $f_\lambda$, for this purpose we need to exclude possible bifurcations of singular points. 

\begin{defn}\label{def:sing}
A \textit{singular connection of $f_\lambda$ of order $n\geq1$} is an $n$-tuple $(i_0,i_1,\ldots,i_{n-1})\in \{1,\ldots,N_\lambda\}^{n}$ such that
$$
\varphi_{i_{n-1}}\circ\cdots\circ\varphi_{i_0}(\{a_0,a_1,\ldots,a_k\})\cap \{a_0,a_1,\ldots,a_k\} \neq \emptyset.
$$
We say that \textit{$f_\lambda$ has a singular connection} if it has a singular connection of some order $n\geq1$.
\end{defn}

Notice that, if $f_\lambda$ has a left periodic singular point (see Definition~\ref{def:percon}), then it has a singular connection.
\begin{lemma}\label{lem:sing connections}
Let $f_\lambda$ be a family of piecewise $\lambda$-affine maps defined by $\Zz$-independent tuples $\veca$ and $\vecb$.  Then the set
$$
\{\lambda\in(0,1)\colon f_\lambda\text{ has a singular connection}\}
$$
is at most countable.
\end{lemma}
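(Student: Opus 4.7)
The plan is to encode the existence of a singular connection of order $n$ as the vanishing, at $\lambda$, of an explicit polynomial, show that every such polynomial is not identically zero by invoking the $\mathbb{Z}$-independence hypothesis, and then conclude by observing that only countably many such polynomials arise.

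First I would unwind the definition. By \eqref{eq:f}, each branch of $f_\lambda$ has the form $\varphi(x)=\lambda x+\beta+p$ for some $\beta\in\{b_1,\ldots,b_k\}$ and $p\in\Zz$. Composing $n$ such branches and evaluating at $a_s\in\{a_0,\ldots,a_k\}$ gives
$$
\varphi_{i_{n-1}}\circ\cdots\circ\varphi_{i_0}(a_s)=\lambda^n a_s+\sum_{j=0}^{n-1}\lambda^{n-1-j}(\beta_j+p_j),
$$
for some $\beta_0,\ldots,\beta_{n-1}\in\{b_1,\ldots,b_k\}$ and $p_0,\ldots,p_{n-1}\in\Zz$. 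Thus $f_\lambda$ has a singular connection of order $n$ if and only if, for some choice of data $(a_s,a_r,(\beta_j),(p_j))$, the polynomial
$$
P(\lambda):=\lambda^n a_s+\sum_{j=0}^{n-1}\lambda^{n-1-j}(\beta_j+p_j)-a_r
$$
has $\lambda$ as a root.

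Next I would verify that $P\not\equiv 0$ for every admissible choice of data. The constant term of $P$ equals $\beta_{n-1}+p_{n-1}-a_r$; if it vanished, then $a_r-\beta_{n-1}\in\Zz$, which forces $a-b\in\Zz$ for some $a\in\{a_0,\ldots,a_k\}$ and some $b\in\{b_1,\ldots,b_k\}$. But Definition~\ref{def:Zind} rules this out when $a\in\{a_0=0,a_1,\ldots,a_{k-1}\}$, and the remaining case $a=a_k=1$ reduces to the $a_0=0$ case since $1-b_j=(-b_j)+1$. Consequently $P$ is a nonzero polynomial of degree at most $n$, and so has at most $n$ real roots.

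Finally, I would collect the count: the admissible data $(n,a_s,a_r,(\beta_j),(p_j))$ range over a countable set (finitely many choices of $a_s,a_r,\beta_j$ at each stage, and countable choices of $n\geq1$ and $p_j\in\Zz$), so the set of $\lambda\in(0,1)$ that admit a singular connection is a countable union of finite sets, hence countable. The main (and only real) obstacle is the constant-term computation in the second step: the leading coefficient $a_s$ of $P$ can vanish (when $a_s=0$), so one cannot simply argue from the top of the polynomial, and it is precisely here that $\mathbb{Z}$-independence — together with the small observation extending it to the endpoint $a_k=1$ — is used in an essential way.
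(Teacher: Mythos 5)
Your proof is correct and follows essentially the same route as the paper: encode a singular connection of order $n$ as a root of an explicit polynomial in $\lambda$, use $\Zz$-independence (extended to $a_k=1$ via $1-b_j\equiv -b_j \pmod{\Zz}$) to see that its constant term $\delta_{i_{n-1}}-a_r$ is nonzero, and conclude by countability of the data. The only cosmetic difference is that the paper first restricts to the finitely many components of $(0,1)\setminus\mathcal{V}$ so that only finitely many branch data occur per order, whereas you let the integers $p_j$ range over all of $\Zz$, which is equally valid since $\Zz$ is countable.
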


\begin{proof}
Let $J$ be a connected component of $(0,1)\setminus \mathcal{V}$ (see \eqref{eq:V}). Since $\mathcal{V}$ is finite there are at most a finite number of connected components.
As previously discussed, $N_\lambda$ and the parameters $\delta_{j}$ defining $\varphi_j$ in \eqref{eq:f} remain constant for every $\lambda\in J$.

Now, given $\lambda\in J$, $f_\lambda$ has a singular connection of order of $n\geq1$ if there exist  $\omega=(i_0,\ldots,i_{n-1})\in\{1,\ldots,N_\lambda\}^n$ and $x,y\in \{a_0,a_1,\ldots,a_k\}$  such that 
\begin{equation}\label{singular connection}
y=\lambda^n x+\delta_{i_{n-1}}+\delta_{i_{n-2}}\lambda+\cdots+\lambda^{n-1}\delta_{i_0}.
\end{equation}
Since $\veca$ and $\vecb$ are $\Zz$-independent, we have $y\neq \delta_{i_{n-1}}$. Consequently, the polynomial 
$$
Q_{x,y,\omega}(\lambda)= y-\left( \lambda^n x+\delta_{i_{n-1}}+\delta_{i_{n-2}}\lambda+\cdots+\lambda^{n-1}\delta_{i_0}\right)
$$
is not identically zero. Thus, it has at most finitely many roots. Therefore, up to countably many $\lambda$'s in $J$,  equation \eqref{singular connection} does not hold for every $x,y\in \{a_0,a_1,\ldots,a_k\}$ and $\omega$ of any order. This shows that, up to many countable $\lambda$'s in the interval $(0,1)$, $f_\lambda$ has no singular connection.

\end{proof}

Let $J$ be a connected component of $(0,1)\setminus \mathcal{V}$ (see \eqref{eq:V}). Notice that $\mathcal{V}$ is finite. Given $\nu\in J$ and $0<\varepsilon<1$, we define the open interval
\begin{equation}\label{eq:interval}
J_\varepsilon(\nu):=J\cap (\nu-\varepsilon,\nu+\varepsilon).
\end{equation}
Recall that $\mathcal{I}_n$ is the set of itineraries of order $n$ of $f_\lambda$ (see Section~\ref{sec:preliminary}). Of course, it depends on the choice of $\lambda$ and we will write $\I_n(\lambda)$ to stress its dependency. Denote by $\I_n^\varepsilon=\I_n^\varepsilon(\nu)$ the union of all $\I_n(\lambda)$ over $\lambda\in J_\varepsilon(\nu)$.

\begin{lemma}\label{lem:singular entropy}
Let $f_\lambda$ be a family of piecewise $\lambda$-affine maps defined by tuples $\veca$ and $\vecb$. Let $\nu\in J$ and suppose that  $f_\nu$ has no singular connection. Then
$$
 \lim_{\varepsilon\to 0^+}\lim_{n\to\infty}\frac1n\log\# \I_n^\varepsilon(\nu)=0.
$$
\end{lemma}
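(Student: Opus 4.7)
The plan is to adapt the proof of Proposition~\ref{lem:singent} uniformly in $\lambda$. Fix $\rho>1$ and set $m:=\lceil \log 2/\log\rho\rceil$. The target is the uniform recursion
\begin{equation}\label{eq:unifrecur}
\#\I_{n+m}^\varepsilon(\nu)\le 2\,\#\I_n^\varepsilon(\nu)\qquad (n\ge n_0),
\end{equation}
valid for all sufficiently small $\varepsilon>0$ (depending on $\rho$, $\nu$, $m$). Iterating \eqref{eq:unifrecur} yields $\#\I_n^\varepsilon(\nu)\le C\rho^n$ for large $n$, so $\limsup_n\frac{1}{n}\log\#\I_n^\varepsilon(\nu)\le \log\rho$. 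Concatenation of itineraries gives $\#\I_{n+p}^\varepsilon\le \#\I_n^\varepsilon\cdot\#\I_p^\varepsilon$, so Fekete's lemma makes the inner $\lim_n$ exist; letting $\rho\to 1^+$ then collapses the iterated limit to $0$.

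To prove \eqref{eq:unifrecur}, I first need a uniform lower bound on the spacing of $S_\lambda^{(m)}$ across $\lambda\in J_\varepsilon(\nu)$. Each point of $S_\lambda^{(m)}$ has the form $(\varphi_{i_{k-1}}\circ\cdots\circ\varphi_{i_0})^{-1}(a_\ell)$ for a \emph{valid pair} $(\omega,\ell)$ of length $k<m$, where validity means the preimage lies in $I$ and its forward $f_\lambda$-orbit actually follows $\omega$. The hypothesis that $f_\nu$ has no singular connection has two consequences: (a) all validity inequalities at $\nu$ are strict (else some $\varphi_\sigma(a_p)=a_q$), so the set of valid pairs is locally constant in $\lambda$; and (b) distinct valid pairs at $\nu$ give distinct points, because if two of them yielded the same $x\in I$ then uniqueness of the $f_\nu$-itinerary of $x$ would force one tuple to be a prefix of the other, and then the difference-suffix $\sigma$ would satisfy $\varphi_\sigma(a_{\ell_1})=a_{\ell_2}$---a singular connection if $|\sigma|\ge 1$. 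Since each valid preimage is a rational function of $\lambda$, continuity yields, for $\varepsilon$ small, a uniform spacing $\tau:=\tfrac12\tau_\nu(m)>0$ between distinct points of $S_\lambda^{(m)}$ for all $\lambda\in J_\varepsilon(\nu)$: each point of $S_\nu^{(m)}$ swells into an $O(\varepsilon)$-wide ``tube'' in $I$, and the tubes are pairwise separated by at least $\tau$.

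Next I uniformly control how cell images move with $\lambda$. Let $T_{\lambda,\omega}:=\bigcap_{j<n} f_\lambda^{-j}(I_{i_j})$; on this cell $f_\lambda^n$ coincides with the affine map $\varphi_\omega:=\varphi_{i_{n-1}}\circ\cdots\circ\varphi_{i_0}$, so $f_\lambda^n(T_{\lambda,\omega})$ has length at most $\lambda^n$, and each of its two endpoints is of the form $\varphi_\tau(a_\ell)$ for some suffix $\tau$ of $\omega$ and some $a_\ell\in\{a_0,\ldots,a_k\}$. The crucial estimate is that $\bigl|\tfrac{d}{d\lambda}\varphi_{\tau,\lambda}(a_\ell)\bigr|$ is dominated by the convergent geometric series $\sum_{p\ge 1} p\lambda_{\max}^{p-1}=(1-\lambda_{\max})^{-2}$ with $\lambda_{\max}:=\sup J_\varepsilon(\nu)<1$, hence bounded \emph{independently of $|\tau|$}. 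Choosing $n_0$ with $\lambda_{\max}^{n_0}<\tau/4$ and further shrinking $\varepsilon$ so that endpoints shift by less than $\tau/4$ as $\lambda$ traverses $J_\varepsilon(\nu)$, the ``super-image'' $\bigcup_{\lambda\in\Lambda(\omega)}\varphi_\omega(T_{\lambda,\omega})$---where $\Lambda(\omega):=\{\lambda\in J_\varepsilon(\nu):\omega\in\I_n(\lambda)\}$---lies inside an interval of length $<3\tau/4<\tau$. By the previous paragraph this interval meets at most one tube, so for every $\lambda\in\Lambda(\omega)$ the image contains at most one singular point of $f_\lambda^m$, giving at most two $m$-step itineraries; and one more application of the no-singular-connection hypothesis shows that the partition memberships following the singular hit are locally constant in $\lambda$, so the same two $m$-suffixes occur for every $\lambda\in\Lambda(\omega)$. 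This establishes \eqref{eq:unifrecur}.

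The main difficulty is precisely the uniformity in $\lambda$: each individual $\lambda$ admits at most two $m$-suffixes of $\omega$, but \emph{a priori} these two could drift as $\lambda$ varies and cumulate into many distinct suffixes within $J_\varepsilon(\nu)$. That drift is suppressed by the geometric-series-convergent derivative bound above---which keeps image endpoints under control at all iteration scales---combined with the local stability of combinatorial labels guaranteed by the absence of singular connections at $\nu$.
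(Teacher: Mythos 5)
Your proposal is correct and follows essentially the same strategy as the paper's proof: use the absence of singular connections at $\nu$ to get a uniform separation $\tau(m)>0$ between the points of $S_\lambda^{(m)}$ for all $\lambda$ near $\nu$, then run the doubling recursion of Proposition~\ref{lem:singent} with an iteration scale $n_0$ chosen uniformly in $\lambda$. The one place where you go beyond the paper's write-up is the geometric-series bound on $\frac{d}{d\lambda}H_\omega(\lambda)$ controlling the drift of cell images as $\lambda$ ranges over $J_\varepsilon(\nu)$; the paper compresses this into ``the rest of the proof follows the same lines,'' but your explicit verification that the (at most) two admissible $m$-suffixes of a given $\omega$ do not vary with $\lambda$ is precisely what is needed to pass from the fixed-$\lambda$ count $\#\I_n(\lambda)$ to the union $\#\I_n^\varepsilon$, so this added care is a strengthening of the exposition rather than a different route.
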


\begin{proof}
The proof of this lemma is an adaptation of the proof of Proposition~\ref{lem:singent}. 
As in that proof,  given $\rho>1$,  let $m=\lceil\log 2 / \log \rho\rceil$ and $\tau(m,\lambda)>0$  be the smallest distance between any two singular points of $f_\lambda^m$.  Because $f_\nu$ has no singular connection, the sets $S_\nu, f_\nu^{-1}(S_\nu),\ldots, f_\nu^{-m+1}(S_\nu)$ are pairwise disjoint.  Therefore, there is an $\varepsilon_0>0$ such that for every $\lambda\in J_{\varepsilon_0}(\nu)$, the sets $S_\lambda, f_\lambda^{-1}(S_\lambda),\ldots, f_\lambda^{-m+1}(S_\lambda)$ remain pairwise disjoint  and have constant cardinality, i.e.,  $\# f_\lambda^{-i}(S_\lambda)=\# f_\nu^{-i}(S_\nu)$ for $i=0,\ldots,m-1$.  This implies that
$$
\tau(m):=\inf\left\{\tau(m,\lambda)\colon \lambda\in J_{\varepsilon_0}(\nu)\right\}>0.
$$
Hence,  any interval $J$ whose length is less than $\tau(m)$ will intersect at most one singular point of $S_\lambda^{(m)}$ for every $\lambda\in J_{\varepsilon_0}(\nu)$. 

Now we choose $n_0\in\Nn$ large enough,  depending on $m$ and $\varepsilon_0$,  such that for every $n\geq n_0$ and every $\lambda\in J_{\varepsilon_0}(\nu)$, the length of $f^{n}_\lambda(W)$ is less than $\tau(m)$ for every connected component $W$ of $I\setminus S_\lambda^{(n)}$.
The rest of the proof follows the same lines as the proof of Proposition~\ref{lem:singent}. So we conclude that $\lim_n\frac1n\log \# \I_n^\varepsilon\leq \log\rho$ for every $0<\varepsilon\leq \varepsilon_0$.  As $\rho>1$ can be chosen arbitrarily close to $1$, this proves the lemma.
\end{proof}

Given  $n\in\Nn$ and $\omega=(i_0,\ldots,i_{n-1})\in\{1,\ldots,N_\lambda\}^{n}$, we define the polynomial
$$
H_\omega(\lambda):=\varphi_{i_{n-1}}\circ\cdots\circ\varphi_{i_0}(0)=\sum_{j=0}^{n-1}\lambda^j \delta_{i_{n-1-j}},
$$
where both $N_\lambda$ and the parameters $\delta_j$ remain constant for every $\lambda\in J_\varepsilon(\nu)$ (see \eqref{eq:V} and \eqref{eq:interval}).
Notice that $H_\omega(\lambda)+\lambda^nx=f^n_\lambda(x)$ for any $x\in I$, where $\omega$ is the corresponding itinerary of order $n$ of $x$ by $f_\lambda$.
Also define
\begin{equation}\label{eq:Omega}
\Omega_\varepsilon(\lambda):=\bigcap_{m\geq1}\overline{\bigcup_{n\geq m}\bigcup_{\omega\in\I_n^\varepsilon} \{H_\omega(\lambda)} \}.
\end{equation}

\begin{lemma}\label{lem:covering}
Let $f_\lambda$ be a family of piecewise $\lambda$-affine maps defined by tuples $\veca$ and $\vecb$ and let $J$ be a connected component of $(0,1)\setminus \mathcal{V}$.  Assume $
\nu\in J$, $\varepsilon>0$, $\lambda\in J_\varepsilon(\nu)$ as in \eqref{eq:interval} and $n\in\Nn$.  Then the set $\Omega_\varepsilon(\lambda)$ can be covered by finitely many intervals of length $4\lambda^n/(1-\lambda)$ which are centered at the points $H_\omega(\lambda)$, where $\omega\in \I_n^\varepsilon$.
\end{lemma}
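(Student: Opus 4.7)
The plan is to combine two observations: first, the set $\I_n^\varepsilon$ is finite, because $N_\lambda$ and the parameters $\delta_j$ remain constant throughout $J$ (by definition of $\mathcal{V}$), so $\I_n^\varepsilon \subset \{1,\ldots,N\}^n$ where $N$ is the common value of $N_\lambda$ on $J$; second, truncating an itinerary to its last $n$ symbols changes the value of $H$ by at most $\lambda^n/(1-\lambda)$. Given these, every limit point $y \in \Omega_\varepsilon(\lambda)$ will be within distance $\lambda^n/(1-\lambda)$ of some $H_{\omega^*}(\lambda)$ with $\omega^* \in \I_n^\varepsilon$, which is stronger than what is required.

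More precisely, I would start by fixing $y \in \Omega_\varepsilon(\lambda)$. By the definition \eqref{eq:Omega}, for every $k \in \Nn$ there exists an itinerary $\omega_k = (i_0^{(k)},\ldots,i_{n_k - 1}^{(k)}) \in \I_{n_k}^\varepsilon$ with $n_k \geq n + k$ and $|H_{\omega_k}(\lambda) - y| < 1/k$. Let $\omega_k'' = (i_{n_k - n}^{(k)},\ldots,i_{n_k - 1}^{(k)})$ be its suffix of length $n$. The key point is that $\omega_k''$ is itself an itinerary of order $n$ in $\I_n^\varepsilon$: indeed, if $\omega_k \in \I_{n_k}(\lambda')$ for some $\lambda' \in J_\varepsilon(\nu)$, witnessed by a point $x \in I$, then $f_{\lambda'}^{n_k - n}(x)$ has itinerary $\omega_k''$ under $f_{\lambda'}$, so $\omega_k'' \in \I_n(\lambda') \subset \I_n^\varepsilon$.

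Next I would compare the two polynomials. Since the parameters $\delta_j$ are constant on $J$, the formula $H_\omega(\lambda) = \sum_{j=0}^{|\omega|-1} \lambda^j \delta_{i_{|\omega|-1-j}}$ shows that the first $n$ terms of $H_{\omega_k}(\lambda)$ coincide with $H_{\omega_k''}(\lambda)$. Hence, using $|\delta_j| \leq 1$,
$$
|H_{\omega_k}(\lambda) - H_{\omega_k''}(\lambda)| = \left| \sum_{j=n}^{n_k - 1} \lambda^j \delta_{i_{n_k - 1 - j}^{(k)}} \right| \leq \sum_{j=n}^{\infty} \lambda^j = \frac{\lambda^n}{1-\lambda}.
$$
Because $\I_n^\varepsilon$ is finite, the pigeonhole principle yields a subsequence along which $\omega_k'' = \omega^*$ is constant for some $\omega^* \in \I_n^\varepsilon$. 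Passing to the limit along this subsequence gives
$$
|H_{\omega^*}(\lambda) - y| \leq \lim_k \bigl( |H_{\omega_k''}(\lambda) - H_{\omega_k}(\lambda)| + |H_{\omega_k}(\lambda) - y| \bigr) \leq \frac{\lambda^n}{1-\lambda}.
$$

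Consequently, $y$ lies in the open interval of length $2\lambda^n/(1-\lambda) < 4\lambda^n/(1-\lambda)$ centered at $H_{\omega^*}(\lambda)$. Since $y \in \Omega_\varepsilon(\lambda)$ was arbitrary, $\Omega_\varepsilon(\lambda)$ is covered by the (finitely many) intervals of length $4\lambda^n/(1-\lambda)$ centered at $H_\omega(\lambda)$, $\omega \in \I_n^\varepsilon$. The only delicate point in this proof is bookkeeping: the index $n-1-j$ in the formula for $H_\omega$ means that the "tail" of the series (geometrically small) corresponds to the \emph{head} of the itinerary, so one must truncate to the \emph{suffix} of length $n$ and invoke the shift-invariance of the space of itineraries to stay inside $\I_n^\varepsilon$; no substantial difficulty arises beyond this.
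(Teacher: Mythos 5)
Your proof is correct and follows essentially the same route as the paper's: truncate each approximating itinerary to its last $n$ entries, note that this suffix lies in $\I_n^\varepsilon$ by shift-invariance, and bound the discarded tail of the sum defining $H_\omega$ by the geometric series $\lambda^n/(1-\lambda)$. The only difference is cosmetic: you add a pigeonhole/subsequence step to pass to the limit and obtain the sharper radius $\lambda^n/(1-\lambda)$, whereas the paper simply fixes one sufficiently large $k$ and settles for the radius $2\lambda^n/(1-\lambda)$, which already suffices for the stated length $4\lambda^n/(1-\lambda)$.
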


\begin{proof}
Given $y\in \Omega_\epsilon(\lambda)$, there are $n_k\nearrow\infty$ and $\omega_k\in\I_{n_k}^\varepsilon$ such that $y_k:=H_{\omega_k}(\lambda)\to y$, as $k\to\infty$. Take $k$ sufficiently large such that $n_k\geq n$ and $|y-y_k|\leq \lambda^{n}$.  Let $\omega_k=(i_0,i_1,\ldots,i_{n_k-1})$ and denote by $[\omega_k]$ the last $n$ entries of $\omega_{k}$.  Clearly, $[\omega_k]\in  \I_n^\varepsilon$ and
$$
\left|H_{\omega_k}(\lambda)-H_{[\omega_{k}]}(\lambda)\right|=\left|\sum_{j=n}^{n_k-1}\lambda^j \delta_{i_{n_k}-1-j}\right|\leq \sum_{j=n}^{n_k-1}\lambda^j \leq \frac{\lambda^n}{1-\lambda}.
$$
Therefore,
\begin{align*}
|y-H_{[\omega_{k}]}(\lambda)|&\leq |y-y_k| + |H_{\omega_k}(\lambda)-H_{[\omega_{k}]}(\lambda)|\\
&\leq  \frac{2\lambda^n}{1-\lambda}.
\end{align*}
\end{proof}

Recall the set $Q$ defined in Lemma~\ref{lem:quasi-partition}. Because $Q$ varies with $\lambda$, we shall denote it by $Q_\lambda$. The next result gives a sufficient condition for the set $Q_\lambda$ to be finite whenever $\lambda\in J_\varepsilon(\nu)$ (see \eqref{eq:interval}). Recall the definition of $\Omega_\varepsilon(\lambda)$ in \eqref{eq:Omega}.


\begin{lemma}\label{lem:suff}
If $$\displaystyle \Omega_\varepsilon(\lambda)\cap \{a_0,a_1,\ldots,a_{k-1}\}=\emptyset,$$ then the set $Q_\lambda$ is finite. 
\end{lemma}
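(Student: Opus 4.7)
My plan is to prove the contrapositive: assuming $Q_\lambda$ is infinite, I will produce a point of $\Omega_\varepsilon(\lambda) \cap \{a_0, a_1, \ldots, a_{k-1}\}$, contradicting the hypothesis. The key preliminary simplification is to identify the singular set $S_\lambda$. Beyond the partition points $\{a_0, a_1, \ldots, a_{k-1}\}$, the only extra singular points of $f_\lambda$ come from the $\pmod{1}$ operation, that is, points $x$ where $F_\lambda(x)\in\Zz$, and at every such point one has $f_\lambda(x)=0=a_0$ by construction. Hence $S_\lambda \subset \{a_0, \ldots, a_{k-1}\} \cup f_\lambda^{-1}(a_0)$, and feeding this into \eqref{eq:Q} collapses the definition to
$$
Q_\lambda = \bigcup_{j=0}^{k-1}\bigcup_{n\geq 0} f_\lambda^{-n}(a_j).
$$

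Next I will apply a pigeonhole argument: since $Q_\lambda$ is infinite and this is a finite union, some index $j\in\{0,\ldots,k-1\}$ must have an infinite backward orbit $B_j := \bigcup_{n\geq 0}f_\lambda^{-n}(a_j)$. Because $B_j$ is an infinite subset of the compact interval $[0,1]$, Bolzano--Weierstrass provides an accumulation point $x^\ast\in[0,1]$; I choose distinct $x_\ell\in B_j$ with $x_\ell\to x^\ast$ and let $n_\ell$ denote the smallest iterate with $f_\lambda^{n_\ell}(x_\ell)=a_j$. Since each truncated backward orbit $\bigcup_{n\leq N}f_\lambda^{-n}(a_j)$ is finite, the $n_\ell$ cannot remain bounded, so after extracting a subsequence I may assume $n_\ell\to\infty$.

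The final step exploits the affine arithmetic encoded by the polynomials $H_\omega$. For each $\ell$, let $\omega_\ell\in\mathcal{I}_{n_\ell}(\lambda)\subset\mathcal{I}_{n_\ell}^\varepsilon$ be the itinerary of $x_\ell$ of order $n_\ell$ under $f_\lambda$. The affine identity $f_\lambda^{n_\ell}(x_\ell)=\lambda^{n_\ell}x_\ell+H_{\omega_\ell}(\lambda)$ together with $f_\lambda^{n_\ell}(x_\ell)=a_j$ yields $H_{\omega_\ell}(\lambda)=a_j-\lambda^{n_\ell}x_\ell$. Since $|x_\ell|\leq 1$ and $0<\lambda<1$, one has $\lambda^{n_\ell}x_\ell\to 0$, so $H_{\omega_\ell}(\lambda)\to a_j$; together with $n_\ell\to\infty$ and $\omega_\ell\in\mathcal{I}_{n_\ell}^\varepsilon$, the definition \eqref{eq:Omega} of $\Omega_\varepsilon(\lambda)$ places $a_j$ in that set, contradicting the hypothesis. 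I do not anticipate a serious obstacle; the only subtle bookkeeping is handling the extra mod-$1$ singular points, which the identity $f_\lambda(S_\lambda\setminus\{a_0,\ldots,a_{k-1}\})\subset\{0\}$ dispatches cleanly once spotted.
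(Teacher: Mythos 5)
Your proof is correct and is essentially the paper's argument run in the contrapositive direction: both rest on the identity $f_\lambda^n(x)=\lambda^n x+H_\omega(\lambda)$ applied along backward orbits of the points $a_j$, and both dispose of the extra mod-$1$ singular points by noting that they map to $a_0=0$. The paper argues directly (the hypothesis yields a uniform $\delta$-gap between the values $H_\omega(\lambda)$ and the $a_j$ for all large $n$, so no new singular points of $f_\lambda^n$ appear beyond a fixed iterate), whereas you assume $Q_\lambda$ infinite and extract a sequence of unbounded depth witnessing $a_j\in\Omega_\varepsilon(\lambda)$; the mathematical content is the same.
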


\begin{proof}
Since $\Omega_\varepsilon(\lambda)\cap  \{a_0,a_1,\ldots,a_{k-1}\}=\emptyset$, there exist $n_0\in\Nn$ and $\delta>0$ such that
\begin{equation}\label{deltaclose}
\min_{0\leq i<k}|H_\omega(\lambda)-a_i|\geq \delta,\quad \forall\,n\geq n_0,\,\omega\in\I_n^\varepsilon.
\end{equation}
Let $n\geq n_1:=\max\{n_0, \lceil\log\delta/\log\lambda\rceil\}$ and suppose that there is $x\in I$ such that $f_\lambda^n(x)\in S_\lambda$ but $f_\lambda^m(x)\notin S_\lambda$ for every $0\leq m< n$, i.e., $x$ is a singular point of $f_\lambda^{n+1}$ but not of a lower iterate of $f_\lambda$. We have two cases: 
\begin{enumerate}
\item[(i)]  If $f^n_\lambda(x)\in \{a_0,a_1,\ldots,a_{k-1}\}$, then $|H_\omega(\lambda)-f^n_\lambda(x)|\leq \lambda^n<\delta$, where $\omega$ is the itinerary of order $n$ associated to $x$. Thus, $H_\omega(\lambda)$ is $\delta$-close to $\{a_0,a_1,\ldots,a_{k-1}\}$ which contradicts \eqref{deltaclose}. 
\item[(ii)]  If $f^n_\lambda(x)\in S_\lambda\setminus \{a_0,a_1,\ldots,a_{k-1}\}$, then $f^{n+1}_\lambda(x)=0$. Let $\omega$ be the itinerary of order $n+1$ associated to $x$, then 
$$
|H_\omega(\lambda)|=|H_\omega(\lambda)-f^{n+1}_\lambda(x)|\leq \lambda^{n+1}<\delta,
$$ 
thus $H_\omega(\lambda)$ is $\delta$-close to $a_0=0$ which contradicts \eqref{deltaclose}. 
\end{enumerate}
Both cases contradict  \eqref{deltaclose}. Thus, for $n\geq n_1$ no new singular point of $f_\lambda^n$ is created, i.e., $f^n_\lambda$ and $f_\lambda^{n_1}$ have  the same singular points  for every $n\geq n_1$.
Therefore, the set $\bigcup_{n\geq0}f^{-n}_\lambda(S_\lambda)$ is finite. 
\end{proof}

\section{Proof of Theorem~\ref{th:main} and Corollary~\ref{cor:main}}\label{sec:proof}
\noindent

In this section we prove Theorem~\ref{th:main} and Corollary~\ref{cor:main}. We will use the following metric \L{}ojasiewicz-type inequality \cite[Theorem 4.1 and Theorem 4.6]{G17}.

\begin{lemma}\label{polyestimate}
Let $0<b<1$ and $r\geq 9$. There exist $0<\theta\leq 1$ and $0<\epsilon_0<1$ such that if $0<\epsilon<\epsilon_0$ and $p(x)$ is a polynomial of degree $n\geq \lceil1/\theta\rceil$ of the form
$$
p(x)=1+ c_1x+c_2x^2+\cdots+c_nx^n,\quad c_i\in[-r,r],
$$
then the following holds:
$$
\Leb(\left\{x\in [0,b] \colon |p(x)|<\epsilon \right\})\leq C \epsilon^{\theta},
$$
where $\Leb$ means  Lebesgue measure and
$$
C= \frac{2^{5+\frac3\theta}(1+r)^{\frac2\theta} n^{2+\frac2\theta}(2+\frac1\theta)}{\epsilon_0^2}.
$$
\end{lemma}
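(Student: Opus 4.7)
The plan is to exploit the hypothesis $p(0)=1$ together with the coefficient bound $|c_i|\leq r$ in order to regard $p$ as a uniformly bounded analytic function on a disk strictly containing $[0,b]$, and then apply a quantitative Remez--Brudnyi--Ganzburg inequality for analytic functions. The coefficient bound immediately gives, for $|z|\leq R$ with any $R\in(b,1)$, the estimate $|p(z)|\leq 1+rR/(1-R)$. Choosing for instance $R=(1+b)/2$ produces a maximum modulus $M=M(b,r)$ that is independent of $n$. Separately, the one-term estimate $|p(x)-1|\leq rx/(1-x)$ shows $|p(x)|\geq 1/2$ on an initial interval $[0,\rho]$ with $\rho=\rho(r)>0$ explicit, so that for any $\epsilon<1/2$ the sublevel set $\{x\in[0,b]:|p(x)|<\epsilon\}$ lies entirely in $[\rho,b]$ and $p$ enjoys a quantitative lower bound there as well.

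\textbf{Two-constants theorem and the exponent $\theta$.} On $[\rho,b]$, I would invoke the standard Remez-type inequality for analytic functions: for $f$ holomorphic and bounded by $M$ on $\overline{D(0,R)}$ with $\max_{[\rho,b]}|f|\geq m$, the Lebesgue measure of $\{x\in[\rho,b]:|f(x)|<\epsilon\}$ is at most $C_0(b-\rho)(\epsilon/m)^\theta$, where the exponent $\theta$ arises from the Hadamard three-circles (two-constants) theorem applied to the nested configuration $[\rho,b]\subset D(0,R)$ and takes the form $\theta\asymp \log(R/b)/\log(M/m)$. Since $m\geq 1/2$ and $M=M(b,r)$ depend only on $b$ and $r$, the exponent $\theta=\theta(b,r)>0$ is fixed independently of the degree $n$, and the threshold $\epsilon_0$ is chosen to make this estimate effective uniformly in $\epsilon\in(0,\epsilon_0)$.

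\textbf{Tracking constants and the degree hypothesis.} The explicit form $C=2^{5+3/\theta}(1+r)^{2/\theta}n^{2+2/\theta}(2+1/\theta)\epsilon_0^{-2}$ would be obtained by following the dependence on $n$ and $r$ through the Brudnyi--Ganzburg doubling argument, together with Markov-type derivative inequalities for polynomials of degree $n$, which are responsible for the $n^{2+2/\theta}$ factor when converting between $L^\infty$ bounds on the disk of radius $R$ and $L^\infty$ bounds on the real interval $[\rho,b]$. The factor $(1+r)^{2/\theta}$ records the dependence of $M$ on $r$ through $1+r(1+b)/(1-b)$, and the $\epsilon_0^{-2}$ absorbs the slack in the two-constants bound near the boundary of the effective range. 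The hypothesis $n\geq\lceil 1/\theta\rceil$ ensures we are in the asymptotic regime where the analytic exponent $\theta$ prevails over the naive polynomial contribution $\epsilon^{1/n}$, so that the degree-dependent terms are absorbed into $\epsilon^\theta$. The main obstacle is quantitative: while the shape of the estimate follows naturally from the analytic Remez machinery, extracting the stated explicit constants requires a careful optimization over the choice of intermediate radius and a sharp form of the two-constants theorem that correctly balances the Markov factors against the analytic decay.
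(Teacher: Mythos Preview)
The paper does not supply its own proof of this lemma: it is quoted from \cite[Theorems~4.1 and~4.6]{G17}, so there is no in-paper argument to compare your proposal against.

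Judged on its own, your strategy is qualitatively sound but does not match the stated result. The normalisation $p(0)=1$ together with $|c_i|\leq r$ does give a uniform bound $|p(z)|\leq 1+rR/(1-R)$ on any disk $|z|\leq R<1$, independent of $n$, and the analytic Remez/two-constants machinery then produces a sublevel estimate $\Leb\{x\in[0,b]:|p(x)|<\epsilon\}\leq C'\epsilon^{\theta'}$ with both $\theta'$ and $C'$ depending only on $b$ and $r$. That is actually \emph{stronger} in its $n$-dependence than the lemma as stated, whose constant carries the factor $n^{2+2/\theta}$. This discrepancy is a signal that the argument in \cite{G17} is not the analytic-function argument you describe: the presence of an explicit power of $n$ in $C$ points to a direct polynomial-level proof (Remez/Chebyshev or Cartan-type estimates on zero sets) in which degree enters naturally, not to a bounded-analytic-function argument where it would not.

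Your own third paragraph reflects this tension: you invoke Markov inequalities for degree-$n$ polynomials to justify the $n^{2+2/\theta}$ factor, but these have no role in a two-constants argument based solely on the uniform disk bound. Either the argument is analytic, in which case no $n$ appears and you have proved something different from the stated lemma, or it is polynomial, in which case the two-constants scaffolding is the wrong framework and the exponent $\theta$ must be obtained by other means. As you yourself concede in the last sentence, the quantitative derivation of the specific constant is not carried out; the proposal is an outline for \emph{an} estimate of the desired shape, not a proof of the lemma as written.
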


\subsection{Proof of Theorem~\ref{th:main}}\label{sec:thm_main}
Let $f_\lambda$ be a family of piecewise $\lambda$-affine maps defined by $\Zz$-independent $k$-tuples $\veca=(0,a_1,\ldots,a_{k-1})$ and $\vecb=(b_1,\ldots,b_k)$.  It is convenient to set $a_0=0$ and $a_k=1$.  We want to show that the set 
$$
\{\lambda\in(0,1)\colon f_\lambda\text{ is not asymptotically periodic}\}
$$
has zero Hausdorff dimension. Denote by $E$ the set of $\lambda\in(0,1)$ such that $f_\lambda$ has a singular connection. By Lemma~\ref{lem:sing connections}, the exceptional set $E$ is at most countable. Notice that, $f_\lambda$ has no left periodic singular point for every $\lambda\in (0,1)\setminus E$. Thus, according to Theorem~\ref{thm:partition}, it is enough to show that 
$$
Z:=\{\lambda\in(0,1)\colon \lambda\notin E,\, Q_\lambda \text{ is not finite}\},
$$
has zero Hausdorff dimension.

Let $J$ be a connected component of $(0,1)\setminus \mathcal{V}$ (see \eqref{eq:V}).  Fix $\nu\in J\setminus E$ and $0<\varepsilon_0<1-\nu$. Given $0<\varepsilon\leq \varepsilon_0$,  define
$$
Z_\varepsilon(\nu):=Z\cap J_\varepsilon(\nu),
$$
where $J_\varepsilon(\nu)$ is the interval defined in \eqref{eq:interval}.  

Our goal is to compute the Hausdorff dimension of the set $Z_\varepsilon(\nu)$. In the next couple of lemmas we will construct a suitable cover of $Z_\varepsilon(\nu)$.

\begin{lemma}\label{lem:1}
For every $0<\varepsilon\leq \varepsilon_0$ and $n\in\Nn$,
$$
Z_\varepsilon(\nu)\subset \bigcup_{\omega\in \I_n^\varepsilon}\bigcup_{j=0}^{k-1}\left\{\lambda\in J_{\varepsilon_0}(\nu)\colon |H_\omega(\lambda)-a_j|\leq  \frac{4(\nu+\varepsilon_0)^n}{1-\nu-\varepsilon_0}\right\}.
$$
\end{lemma}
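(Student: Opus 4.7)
The plan is to combine Lemma~\ref{lem:suff} (the sufficient condition for finiteness of $Q_\lambda$) with Lemma~\ref{lem:covering} (the approximation of points of $\Omega_\varepsilon(\lambda)$ by values $H_\omega(\lambda)$), plus a uniform estimate in $\lambda$ on the interval $J_{\varepsilon_0}(\nu)$. The statement itself is essentially a rewriting exercise, so no deep new idea is needed.

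First, I would take any $\lambda\in Z_\varepsilon(\nu)$. By definition, $\lambda\in J_\varepsilon(\nu)\subset J_{\varepsilon_0}(\nu)$ and $Q_\lambda$ is not finite. The contrapositive of Lemma~\ref{lem:suff} then gives $\Omega_\varepsilon(\lambda)\cap\{a_0,a_1,\ldots,a_{k-1}\}\neq\emptyset$, so there exists $j\in\{0,1,\ldots,k-1\}$ with $a_j\in \Omega_\varepsilon(\lambda)$.

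Next, I would invoke Lemma~\ref{lem:covering}: the set $\Omega_\varepsilon(\lambda)$ is covered by the intervals of length $4\lambda^n/(1-\lambda)$ centered at the points $H_\omega(\lambda)$, $\omega\in\I_n^\varepsilon$. In particular, some $\omega\in\I_n^\varepsilon$ satisfies
\[
|H_\omega(\lambda)-a_j|\leq \frac{2\lambda^n}{1-\lambda}.
\]

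Finally, I would turn this into the uniform bound appearing in the statement by exploiting $\lambda<\nu+\varepsilon\leq\nu+\varepsilon_0<1$: the numerator satisfies $\lambda^n\leq(\nu+\varepsilon_0)^n$ and the denominator satisfies $1-\lambda\geq 1-\nu-\varepsilon_0>0$, hence
\[
\frac{2\lambda^n}{1-\lambda}\leq \frac{2(\nu+\varepsilon_0)^n}{1-\nu-\varepsilon_0}\leq \frac{4(\nu+\varepsilon_0)^n}{1-\nu-\varepsilon_0}.
\]
This places $\lambda$ in the $j$-th set of the union on the right-hand side, completing the inclusion. There is no real obstacle here; the only thing to be careful about is the direction of the inequality $\lambda\leq \nu+\varepsilon_0$ (used to get both $\lambda^n\leq(\nu+\varepsilon_0)^n$ and $1/(1-\lambda)\leq 1/(1-\nu-\varepsilon_0)$), which is exactly what the choice $0<\varepsilon_0<1-\nu$ guarantees.
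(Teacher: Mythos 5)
Your proposal is correct and follows exactly the paper's argument: contrapositive of Lemma~\ref{lem:suff} to land in the set where $\Omega_\varepsilon(\lambda)$ meets $\{a_0,\ldots,a_{k-1}\}$, then Lemma~\ref{lem:covering} to produce the itinerary $\omega$, then the monotone bounds $\lambda^n\leq(\nu+\varepsilon_0)^n$ and $1-\lambda\geq 1-\nu-\varepsilon_0$. The only (harmless) difference is that you use the sharper radius $2\lambda^n/(1-\lambda)$ from the covering lemma where the paper simply writes the full length $4\lambda^n/(1-\lambda)$; both land inside the stated bound.
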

\begin{proof}
By Lemma~\ref{lem:suff}, 
$$
Z_{\varepsilon}(\nu)\subset X:= \{\lambda\in J_{\varepsilon}(\nu)\colon \Omega_\varepsilon(\lambda)\cap \{a_0,a_1,\ldots,a_{k-1}\}\neq\emptyset\}.
$$
According to Lemma~\ref{lem:covering}, the set $\Omega_\varepsilon(\lambda)$, with $\lambda\in J_\varepsilon(\nu)$, can be covered by $\# \I_n^\varepsilon$ intervals of length $4\lambda^n/(1-\lambda)$ centred at the points $\{H_\omega(\lambda)\colon \omega\in \I_n^\varepsilon\}$.  Therefore,  for every $\lambda \in X$,  there exist $a_j$ with $j\in\{0,\ldots, k-1\}$ and $\omega\in \I_n^\varepsilon$ such that $| H_\omega(\lambda)-a_j|\leq 4\lambda^n/(1-\lambda)\leq 4(\nu+\varepsilon_0)^n/(1-\nu-\varepsilon_0)$.
The conclusion follows by noticing that $J_\varepsilon(\nu)\subset J_{\varepsilon_0}(\nu)$.
\end{proof}

In the following lemma, we use the \L{}ojasiewicz-type inequality of Lemma~\ref{polyestimate} to cover $Z_\varepsilon(\nu)$ using intervals with controlled estimates on their diameter. 
\begin{lemma}\label{lem:2}
There exist $0<\theta\leq 1$, $n_0\geq 2$ and $C>0$ such that the following holds.  For every $0<\varepsilon\leq \varepsilon_0$, $n\geq n_0$,  $\omega\in \I_n^{\varepsilon}$ and $j\in\{0,\ldots,k-1\}$, there exist intervals $U_{\omega,j}^{(1)},\ldots,U_{\omega,j}^{(n-1)}$ each of $\diam(U_{\omega,j}^{(i)})<\eta_n$ such that
$$
\left\{\lambda\in J_{\varepsilon_0}(\nu)\colon |H_\omega(\lambda)-a_j|\leq  \frac{4(\nu+\varepsilon_0)^n}{1-\nu-\varepsilon_0}\right\}\subset \bigcup_{i=1}^{n-1} U_{\omega,j}^{(i)},
$$
where
$$
\eta_n:=C (n-1)^{2+\frac2\theta}(\nu+\varepsilon_0)^{\theta n}.
$$
\end{lemma}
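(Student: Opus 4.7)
The plan is to regard $\lambda\mapsto P_{\omega,j}(\lambda):=H_\omega(\lambda)-a_j$ as a real polynomial of degree at most $n-1$ in $\lambda$, to apply the \L{}ojasiewicz-type estimate of Lemma~\ref{polyestimate} after normalizing by its constant term, and to convert the resulting Lebesgue measure bound into a diameter bound for each component by using the classical fact that the sublevel set of a polynomial of degree $d$ has at most $d$ connected components.

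First I would unpack $H_\omega$ via the formula preceding~\eqref{eq:Omega} and the expression of $\varphi_j$ in~\eqref{eq:f}, obtaining
$$
P_{\omega,j}(\lambda)=\alpha+\sum_{\ell=1}^{n-1}\delta_{i_{n-1-\ell}}\lambda^{\ell},\qquad \alpha:=\delta_{i_{n-1}}-a_j.
$$
Writing $\delta_{i_{n-1}}=\beta_{i_{n-1}}+p_{i_{n-1}}$ with $\beta_{i_{n-1}}\in\{b_1,\ldots,b_k\}$ and $p_{i_{n-1}}\in\Zz$, the $\Zz$-independence of $\veca$ and $\vecb$ forces $\alpha\neq 0$. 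On the component $J\ni\nu$ the constants $\delta_j$ do not depend on $\lambda$ and range over a finite set, so $\alpha$ takes finitely many nonzero values; consequently there exists $\alpha_0>0$ (depending only on $\veca,\vecb,J$) such that $|\alpha|\geq\alpha_0$ for every admissible $\omega$ and $j$. Dividing by $\alpha$ produces $p(\lambda):=P_{\omega,j}(\lambda)/\alpha=1+\sum_{\ell=1}^{n-1}c_\ell\lambda^{\ell}$ with $|c_\ell|\leq r:=\max(9,\,1/\alpha_0)$.

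Next I would apply Lemma~\ref{polyestimate}. Let $\theta\in(0,1]$ and $\epsilon_*\in(0,1)$ denote the constants it yields for this value of $r$, and set $\epsilon:=4(\nu+\varepsilon_0)^n/(|\alpha|(1-\nu-\varepsilon_0))$. Choosing $n_0\geq\lceil 1/\theta\rceil+1$ large enough ensures $\epsilon<\epsilon_*$ for every $n\geq n_0$; the finitely many itineraries whose associated polynomial has actual degree strictly below $\lceil 1/\theta\rceil$ can be treated separately, since for such low-degree $p$'s the modulus $|p(\lambda)|$ is bounded uniformly away from $0$ on $[0,\nu+\varepsilon_0]$ once $n$ is large, making the sublevel set empty. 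Applying Lemma~\ref{polyestimate} to $p$ on $[0,\nu+\varepsilon_0]\subset[0,1)$ would then give
$$
\Leb\!\left\{\lambda\in J_{\varepsilon_0}(\nu)\colon |P_{\omega,j}(\lambda)|\leq \tfrac{4(\nu+\varepsilon_0)^n}{1-\nu-\varepsilon_0}\right\}\leq C_0\,(n-1)^{2+2/\theta}(\nu+\varepsilon_0)^{\theta n},
$$
after collecting the $\alpha$- and $\nu$-dependent prefactors into a single constant $C_0$.

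To finish, I would use that $P_{\omega,j}$ has degree at most $n-1$, so its derivative has at most $n-2$ real zeros and $P_{\omega,j}$ admits at most $n-1$ intervals of monotonicity in $[0,\nu+\varepsilon_0]$; on each such monotone piece the sublevel set contributes at most one connected subinterval, so the set displayed above is a disjoint union of at most $n-1$ closed subintervals, which I would label $U_{\omega,j}^{(1)},\ldots,U_{\omega,j}^{(n-1)}$ (padding with empty intervals if there are fewer components). Each $U_{\omega,j}^{(i)}$ is a subset of that sublevel set, so its diameter is bounded by the total Lebesgue measure computed above, yielding $\diam(U_{\omega,j}^{(i)})<\eta_n$ with $C=C_0$. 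The hard part is securing the uniform lower bound $|\alpha|\geq\alpha_0$: this is the only place where the $\Zz$-independence hypothesis is used, and it is essential for producing a single value of $r$ that makes the constant in Lemma~\ref{polyestimate} independent of the itinerary $\omega$.
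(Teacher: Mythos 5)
Your proposal is correct and follows essentially the same route as the paper: normalize $H_\omega(\lambda)-a_j$ by its constant term $\delta_{i_{n-1}}-a_j$ (nonzero and uniformly bounded below thanks to $\Zz$-independence), apply the \L{}ojasiewicz-type estimate of Lemma~\ref{polyestimate}, and then split the sublevel set of the degree-$(n-1)$ polynomial into at most $n-1$ subintervals each of diameter controlled by the measure bound. The only superfluous step is your separate treatment of ``low-degree'' itineraries: since $a_k=1$ and the tuples are $\Zz$-independent, each $b_i\notin\Zz$, hence $\delta_{i_0}\neq0$ and the polynomial has exact degree $n-1$ for every $\omega\in\I_n^{\varepsilon}$, so it suffices to take $n_0\geq\lceil 1/\theta\rceil+1$.
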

\begin{proof}
Recall that the parameters $\delta_j$ in \eqref{eq:f} remain constant for every $\lambda\in J_{\varepsilon_0}(\nu)$ and $|\delta_j|\leq 1$.  For any $0<\varepsilon\leq \varepsilon_0$, $0\leq j <k$, $n\geq2$ and $\omega\in \I_n^{\varepsilon}$, the polynomial $H_\omega(\lambda)-a_j \in \Rr[\lambda]$ has degree equal to $n-1$. Indeed, because $f_\lambda$ is defined by $\Zz$-independent tuples $\veca$ and $\vecb$, we can write
\begin{align*}
H_\omega(\lambda)-a_j&=\delta_{i_{n-1}}+\delta_{i_{n-2}}\lambda+\cdots+\delta_{i_0}\lambda^{n-1}-a_j\\
&=(\delta_{i_{n-1}}-a_j)\left(1+\delta'_{i_{n-2}}\lambda+\cdots+\delta'_{i_0}\lambda^{n-1}\right),
\end{align*}
where $\omega=(i_0,\ldots, i_{n-1})$, $\delta'_{i_m}:=\delta_{i_m}/(\delta_{i_{n-1}}-a_j)$ for $m=0,\ldots, n-2$ and
$$
0<|\delta'_{i_m}|\leq \frac{1}{|\delta_{i_{n-1}}-a_j|}\leq \frac{1}{\displaystyle\min_{1\leq i,j\leq k}\min_{p\in\Zz}|p+b_i-a_j|}<\infty.
$$ 
Notice that $b_i\notin \Zz$,  and therefore $\delta_i\neq 0$, for any $i\in\{1,\ldots,k\}$, since $a_k=1$ and $\veca$ and $\vecb$ are $\Zz$-independent. Define,
$$
r:=\max\left\{9,\frac{1}{\displaystyle\min_{1\leq i,j\leq k}\min_{p\in\Zz}|p+b_i-a_j|}\right\}.
$$
Taking $n_0\geq2$ sufficiently large, 
we can apply Lemma~\ref{polyestimate} with $\epsilon = \frac{4r(\nu+\varepsilon_0)^n}{1-\nu-\varepsilon_0}$ and get the following estimate which holds for every $n\geq n_0$,
$$
\Leb\left\{\lambda\in J_{\varepsilon_0}(\nu)\colon |H_\omega(\lambda)-a_j|\leq  \frac{4(\nu+\varepsilon_0)^n}{1-\nu-\varepsilon_0} \right\}\leq C (n-1)^{2+\frac2\theta} (\nu+\varepsilon_0)^{\theta n},
$$
where $0<\theta\leq 1$, and $C>0$ is a constant independent of $\omega$, $n$,  $j$ and $\varepsilon$. 
Therefore, since $H_{\omega}$ is a polynomial of degree $n-1$, there exist $n-1$ intervals $U_{\omega,j}^{(1)},\ldots,U_{\omega,j}^{(n-1)}$ of diameter less than $\eta_n:=C (n-1)^{2+\frac2\theta}(\nu+\varepsilon_0)^{\theta n}$ that form the required cover.
\end{proof}

Now we complete the proof of Theorem~\ref{th:main}.

Notice that $\lim_{n\to+\infty}\eta_n=0$ where $\eta_n$ is the sequence in Lemma~\ref{lem:2}.  By Lemma~\ref{lem:1} and Lemma~\ref{lem:2}, for every $0<\varepsilon\leq \varepsilon_0$,  $0<\sigma\leq 1$ and $n\geq n_0$,  we have 
\begin{align*}
\mathcal{H}_{\eta_n}^\sigma (Z_\varepsilon(\nu))&=\inf\left\{\sum_i\diam(U_i)^\sigma\colon Z_\varepsilon(\nu)\subset \bigcup_i U_i,\, \diam(U_i)<\eta_n\right\}\\
&\leq \sum_{\omega\in \I_n^\varepsilon}\sum_{j=0}^{k-1}\sum_{i=1}^{n-1}\diam\left(U_{\omega,j}^{(i)}\right)^\sigma\\
&< \sum_{\omega\in \I_n^\varepsilon}\sum_{j=0}^{k-1}\sum_{i=1}^{n-1} \eta_n^\sigma\\
&= C^\sigma (\#\I_n^\varepsilon)\, k\, (n-1)^{1+\sigma(2+\frac2\theta)}(\nu+\varepsilon_0)^{\sigma\theta n}.
\end{align*}

By Lemma~\ref{lem:singular entropy},  there is $0<\varepsilon\leq \varepsilon_0$ and $n_1\geq n_0$ such that,
$$
\log(\#\I_n^\varepsilon) < -\frac{\sigma\theta n}{2} \log(\nu+\varepsilon_0),\quad \forall\,n\geq n_1.
$$
Hence, 
$
(\#\I_n^\varepsilon)(\nu+\varepsilon_0)^{\sigma\theta n}<(\nu+\varepsilon_0)^\frac{\sigma\theta n}{2}
$ for every $n\geq n_1$, 
which implies that $$\lim_{n\to+\infty}\mathcal{H}_{\eta_n}^\sigma (Z_\varepsilon(\nu)) = 0.$$ Thus $\mathcal{H}^\sigma(Z_\varepsilon(\nu))=0$ for every $0<\sigma\leq 1$, and $Z_\varepsilon(\nu)$ has Hausdorff dimension equal to zero. Because $Z$ is a countable union of the family of sets $\{Z_{\varepsilon_i}(\nu_i)\}_i$ each with zero Hausdorff dimension, we conclude that $Z$ also has zero Hausdorff dimension. \qed

\subsection{Proof of Corollary~\ref{cor:main}}

Let $\lambda$ and $b$ be two numbers such that $0<1-\lambda < b<1$. Let $R_{\lambda, b}$  be the contracted rotation defined by $\lambda$ and $b$  (see \eqref{eq:contractedrotation}).  We will write $R_{\lambda, b}$ as a piecewise $\lambda$-affine map generated by a $k$-interval piecewise $\lambda$-affine function.
Set $\veca=(0,a_1,a_2,\ldots, a_{k-1})\in A^{(k)}$ and  $\vecb=(b,\ldots,b)\in\Rr^k$. Using the $k$-tuples $\veca$ and $\vecb$, we define the $k$-interval piecewise $\lambda$-affine function $F_{\veca,\vecb,\lambda}$ (see \eqref{eq:FF}).  Let $f_{\veca,\vecb,\lambda}$ be the piecewise $\lambda$-affine map associated to $F_{\veca,\vecb,\lambda}$., i.e., $f_{\veca,\vecb,\lambda} = F_{\veca,\vecb,\lambda} \pmod{1}$.  The maps $f_{\veca,\vecb,\lambda}$ and $R_{\lambda, b}$ are identical.
This proves that the exceptional set $\mathfrak{E}^{(k)}$ contains a set which is metrically isomorphic to the exceptional set $A^{(k)}\times \mathcal{E}$ (see \eqref{exceptional1} for the definition of $\mathcal{E}$) and we know that $\dim_H \mathcal{E}=1$ (\cite[Theorem 7.2]{JO19}). This shows that $\dim_H\mathfrak{E}^{(k)} \ge k$.

Now we prove that the exceptional set $\mathfrak{E}^{(k)}$ is a null Lebesgue measure subset of $\Rr^{2k}$.  Denote by $f_{\veca,\vecb,\lambda}$ the piecewise $\lambda$-affine map defined by the parameters $(\veca,\vecb,\lambda)\in A^{(k)}\times \Rr^k\times(0,1)$.  In the following, we identify $A^{(k)}$ with a subset of $\Rr^{k-1}$ by dropping the first coordinate.  Denote by $\Omega$ the set of $(\veca,\vecb)\in A^{(k)}\times\Rr^k$ such that $\veca$ and $\vecb$ are $\Zz$ -- independent.  Cearly, $\Omega$ is a full measure subset of $\Rr^{2k-1}$.  By the proof of Lemma~\ref{lem:sing connections},  the set
$$
E=\{(\veca,\vecb,\lambda)\in \Omega\times(0,1)\colon f_{\veca,\vecb,\lambda}\text{ has a singular connection}\}
$$
is contained in a countable union of co-dimension one algebraic subsets of $\Rr^{2k}$.  Hence,  $E$ is a null subset of $\Rr^{2k}$. 
By Theorem~\ref{thm:partition}, 
$$
\mathfrak{E}^{(k)}\setminus E\subset Z:=\{(\veca,\vecb,\lambda)\in A^{(k)}\times \Rr^k\times(0,1)\colon (\veca,\vecb,\lambda)\notin E,\, Q_{\veca,\vecb,\lambda}\text{ is not finite}\},
$$
where $Q_{\veca,\vecb,\lambda}$ is defined in Lemma~\ref{lem:quasi-partition}.   The set $Z$ is measurable because $Q_{\veca,\vecb,\lambda}$ is a countable union of the sets $f^{-n}_{\veca,\vecb,\lambda}(\{a_j\})$ with $n\in\Nn$ and $j=0,\ldots,k-1$.  Moreover,  we have proved in Theorem~\ref{th:main} that the sections $Z_{\veca,\vecb}=\{\lambda\in(0,1)\colon (\veca,\vecb,\lambda)\in Z\}$ are null sets for every $(\veca,\vecb)\in\Omega$.  By Fubini theorem, we conclude that $Z$ is a null set,  thus $\mathfrak{E}^{(k)}$ is also a null set.\qed

\section*{Acknowledgement}
The author J.P.G. was partially supported by the Project PTDC/MAT-PUR/29126/2017 and by the Project CEMAPRE/REM - UIDB /05069/2020 - financed by FCT/MCTES through national funds. The author A.N. graciously acknowledges the support  of CEFIPRA through the Project No. 5801-1/2017. The project leading to this publication has received funding from Excellence Initiative of Aix-Marseille University - A*MIDEX and Excellence Laboratory Archimedes LabEx (ANR-11-LABX-0033), French "Investissements d'Avenir" programmes.

\bibliographystyle{amsplain}

\end{document}